\begin{document}\baselineskip 0.8cm

\newtheorem{lem}{Lemma}[section]
\newtheorem{thm}[lem]{Theorem}
\newtheorem{cor}[lem]{Corollary}
\newtheorem{exa}[lem]{Example}
\newtheorem{con}[lem]{Conjecture}
\newtheorem{rem}[lem]{Remark}
\newtheorem{obs}[lem]{Observation}
\newtheorem{definition}[lem]{Definition}
\newtheorem{prop}[lem]{Proposition}
\theoremstyle{plain}
\newcommand{\D}{\displaystyle}
\newcommand{\DF}[2]{\D\frac{#1}{#2}}

\renewcommand{\figurename}{{\bf Fig}}
\captionsetup{labelfont=bf}

\title{{\Large\bf Some extremal results on the colorful monochromatic
vertex-connectivity of a graph}\footnote{Supported by NSFC No.11371205, ``973" program
No.2013CB834204, and PCSIRT.}}

\author{\small Qingqiong~Cai, Xueliang~Li, Di~Wu\\
\small Center for Combinatorics and LPMC-TJKLC\\
\small Nankai University, Tianjin 300071, China\\
\small cqqnjnu620@163.com; lxl@nankai.edu.cn; wudiol@mail.nankai.edu.cn}
\date{}
\maketitle

\begin{abstract}
A path in a vertex-colored graph is called a
\emph{vertex-monochromatic path} if its internal vertices have the
same color. A vertex-coloring of a graph is a \emph{monochromatic
vertex-connection coloring} (\emph{MVC-coloring} for short), if
there is a vertex-monochromatic path joining any two vertices in the
graph. For a connected graph $G$, the \emph{monochromatic
vertex-connection number}, denoted by $mvc(G)$, is defined to be the
maximum number of colors used in an \emph{MVC-coloring} of $G$.
These concepts of vertex-version are natural generalizations of the
colorful monochromatic connectivity of edge-version, introduced by
Caro and Yuster. In this paper, we mainly investigate the
Erd\H{o}s-Gallai-type problems for the monochromatic
vertex-connection number $mvc(G)$ and completely determine the exact
value. Moreover, the Nordhaus-Gaddum-type inequality for $mvc(G)$ is
also given.

{\flushleft\bf Keywords}: vertex-monochromatic path, $MVC$-coloring,
monochromatic vertex-connection number, Erd\H{o}s-Gallai-type
problem, Nordhaus-Gaddum-type problem

{\flushleft\bf AMS subject classification 2010}: 05C15, 05C35, 05C38, 05C40.

\end{abstract}

\section{Introduction}

All graphs considered in this paper are simple, finite, undirected
and connected. We follow the terminology and notation of Bondy and
Murty \cite{Bondy}. For a graph $G$, we use $V(G)$, $E(G)$, $n(G)$,
$m(G)$, $\Delta(G)$, $\delta(G)$, $deg(u)$ to denote its vertex set,
edge set, the number of vertices, the number of edges, maximum
degree, minimum degree and the degree of vertex $u$, respectively.
For $D\subseteq V(G)$, let $|D|$ be the number of vertices in $D$,
and $G[D]$ the subgraph of $G$ induced by $D$. We use $d(u,v)$ to
denote the distance between two vertices $u$ and $v$ in $G$, and
$diam(G)$ to denote the maximum distance of any two vertices in $G$.
A $\{u,v\}$-path is a path connecting $u$ and $v$. A
$\{u,v\}$-geodesic is a $\{u,v\}$-path of length $d(u,v)$. We write
$u\sim v$ if $u$ is adjacent to $v$, and $u\nsim v$ if $u$ is not
adjacent to $v$.

A path in an edge-colored graph is a \emph{monochromatic path} if
all the edges on the path are colored the same. An edge-coloring of
a graph is a \emph{monochromatical connection coloring}
(\emph{MC-coloring}, for short) if there is a monochromatic path
joining any two vertices in the graph. For a connected graph $G$,
the \emph{monochromatical connection number}, denoted by $mc(G)$, is
defined to be the maximum number of colors used in an
\emph{MC-coloring} of $G$. An \emph{extremal MC-coloring} is an
MC-coloring that uses $mc(G)$ colors. These concepts were introduced
by Caro and Yuster in \cite{Caro}, where they obtained some
nontrivial lower and upper bounds for $mc(G)$. In \cite{Cai}, we
studied two kinds of Erd\H{o}s-Gallai-type problems for $mc(G)$ and
completely solved them.

As a natural idea, we introduce the vertex-version of these concepts
in the following. A path in a vertex-colored graph is a
\emph{vertex-monochromatic path} if its internal vertices have the
same color. An vertex-coloring of a graph is a \emph{monochromatical
vertex-connection coloring} (\emph{MVC-coloring}, for short), if
there is a vertex-monochromatic path joining any two vertices in the
graph. For a connected graph $G$, the \emph{monochromatical
vertex-connection number}, denoted by $mvc(G)$, is defined to be the
maximum number of colors used in an \emph{MVC-coloring} of $G$. An
\emph{extremal MVC-coloring} is an \emph{MVC-coloring} that uses
$mvc(G)$ colors.

It is worth mentioning that the question for determining the
monochromatic vertex-connection number is a natural opposite
counterpart of the recently well-studied problem of vertex-rainbow
connection number \cite{Krivelevich and Yuster,Lishi,chenli}, where
in the latter we seek to find the minimum number of colors needed in
a vertex-coloring so that there is a vertex-rainbow path joining any
two vertices.

An important property of an extremal \emph{MVC-coloring} is that the
vertices with each color form a connected subgraph. Indeed, if the
subgraph formed by the vertices with a same color is disconnected,
then a new color can be assigned to all the vertices of some
component while still maintaining an \emph{MVC-coloring}. For a
color $c$, the \emph{color subgraph} $G_c$ is the connected subgraph
of $G$ induced by the vertices with color $c$. The color $c$ is
\emph{nontrivial} if $G_c$ has at least two vertices. Otherwise, $c$
is \emph{trivial}. A nontrivial color subgraph with $t$ vertices is
said to \emph{waste} $t-1$ colors.

In this paper, we mainly investigate the Erd\H{o}s-Gallai-type and
Nordhaus-Gaddum-type results for colorful monochromatic
vertex-connectivity of a graph.

The Erd\H{o}s-Gallai-type problem is a kind of extremal problems to
determine the maximum or minimum value of a graph parameter with
some given properties. The interested readers can see the monograph
written by Bollob$\acute{a}$s \cite{Boll}, which has a collection of
such extremal problems in graph theory.

A Nordhaus-Gaddum-type result is a (tight) lower or upper bound on
the sum or product of the values of a parameter for a graph and its
complement. The name ``Nordhaus-Gaddum-type'' is given because
Nordhaus and Gaddum \cite{Nordhaus} first established the type of
inequalities for the chromatic number of graphs in 1956. They proved
that if $G$ and $\overline{G}$ are complementary graphs on $n$
vertices whose chromatic numbers are $\chi(G)$ and
$\chi(\overline{G})$, respectively, then $2\sqrt{n}\leq
\chi(G)+\chi(\overline{G})\leq n+1$. Since then, many analogous
inequalities of other graph parameters have been considered, such as
diameter \cite{Harary 1}, domination number \cite{Harary 2}, rainbow
connection number \cite{X. Li}, and so on \cite{CLL, LiMao}. For a
good survey we refer to \cite{AH}.

The rest of this paper is organized as follows. First, we prove some
upper and lower bounds for $mvc(G)$ in terms of the minimum degree
and the diameter. Then we investigate the Erd\H{o}s-Gallai-type
problem and completely determine the exact value. Finally, the
Nordhaus-Gaddum-type inequality for $mvc(G)$ is given.

\section{Upper and lower bounds for $mvc(G)$}

For a connected graph $G$, we take a spanning tree $T$ of $G$. Color
all the non-leaves in $T$ with one color, and each leave in $T$ with
a distinct fresh color. Clearly, this is an MVC-coloring of $G$ with
$\ell(T)+1$ colors, where $\ell(T)$ is the number of leaves in $T$.
Thus we get the following proposition.
\begin{prop}\label{prop1}
Let $G$ be a connected graph with a spanning tree $T$. Then
$mvc(G)\geq \ell(T)+1\geq 3$.
\end{prop}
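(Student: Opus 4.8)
The plan is to verify that the coloring described just above the statement is a valid MVC-coloring and then count its colors. Recall the coloring: fix a spanning tree $T$ of $G$, assign a single common color $c_0$ to every non-leaf of $T$, and give each leaf of $T$ its own fresh color. Since $T$ is a spanning subgraph of $G$, every vertex of $G$ receives exactly one color under this rule, so it is a legitimate vertex-coloring of $G$; it remains to check the connection property and to count the colors used.

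The crucial step is to show that any two vertices are joined by a vertex-monochromatic path. Given $u,v\in V(G)$, I would take $P$ to be the unique $\{u,v\}$-path in $T$, which is also a $\{u,v\}$-path in $G$. If $P$ has length at most one it has no internal vertices and is vacuously vertex-monochromatic, so assume $P$ has an internal vertex $w$. Any such $w$ lies strictly between the endpoints of $P$, hence has two distinct neighbours on $P$ and therefore degree at least two in $T$; consequently $w$ is not a leaf of $T$ and is coloured $c_0$. Thus all internal vertices of $P$ share the color $c_0$, so $P$ is a vertex-monochromatic $\{u,v\}$-path. As $u$ and $v$ were arbitrary, the coloring is an MVC-coloring. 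This observation, that the internal vertices of a tree path are never leaves, is the heart of the argument; everything else is bookkeeping.

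For the count, note that when $n(G)\geq 3$ the tree $T$ has at least one non-leaf, since a tree on $n\geq 3$ vertices cannot have all of its vertices of degree one (the degree sum equals $2(n-1)>n$). Hence the color $c_0$ is actually used and is distinct from the $\ell(T)$ leaf colors, so the coloring uses exactly $\ell(T)+1$ colors and $mvc(G)\geq \ell(T)+1$. Finally, $\ell(T)\geq 2$: the two endpoints of a longest path in $T$ both have degree one and are therefore leaves, so every spanning tree has at least two leaves. Combining the two estimates yields $mvc(G)\geq \ell(T)+1\geq 3$, as claimed. The only point needing care is the degenerate small case (for $n=2$ the color $c_0$ is unused), which is why the bound $\geq 3$ implicitly requires $n\geq 3$.
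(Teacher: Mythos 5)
Your proof is correct and follows exactly the paper's approach: the paper presents the same construction (one color on the non-leaves of $T$, a distinct fresh color on each leaf) and simply asserts ``clearly'' what you verify in detail, namely that internal vertices of tree paths are non-leaves and that $\ell(T)\geq 2$. Your remark that the count of $\ell(T)+1$ colors (and hence the bound $\geq 3$) implicitly requires $n\geq 3$ is a valid observation that the paper glosses over.
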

In order to obtain a good lower bound for $mvc(G)$,
we need to find a spanning tree with as many leaves as possible.
By the known results about spanning trees with many leaves in
\cite{Caro1, Griggs, Kleitman}, we have
\begin{prop}
Let $G$ be a connected graph on $n$ vertices with minimum degree $\delta$.

$(1)$ If $\delta\geq3$, then $mvc(G)\geq \frac{1}{4}n+3$.

$(2)$ If $\delta\geq4$, then $mvc(G)\geq \frac{2}{5}n+\frac{13}{5}$.

$(3)$ If $\delta\geq5$, then $mvc(G)\geq \frac{1}{2}n+3$.

$(4)$ If $\delta\geq3$, then $mvc(G)\geq
\left(1-\frac{\ln(\delta+1)}{\delta+1}(1+o_{\delta}(1))\right)n+1$.
\end{prop}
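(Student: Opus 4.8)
The plan is to reduce everything to Proposition~\ref{prop1}, which guarantees $mvc(G)\geq \ell(T)+1$ for \emph{any} spanning tree $T$ of $G$. Since this holds for every spanning tree, we are free to choose $T$ to be one maximizing the number of leaves, so that $mvc(G)\geq \ell_{\max}(G)+1$, where $\ell_{\max}(G)$ denotes the maximum number of leaves over all spanning trees of $G$. Thus each of the four parts follows immediately once the appropriate lower bound on $\ell_{\max}(G)$ in terms of $n$ and $\delta$ is substituted; no independent coloring argument is needed.

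For parts $(1)$--$(3)$ I would invoke the classical extremal results on the maximum-leaf spanning tree problem. For $\delta\geq 3$, the Kleitman--West type bound \cite{Kleitman} gives $\ell_{\max}(G)\geq \frac14 n+2$; for $\delta\geq 4$ one has $\ell_{\max}(G)\geq \frac{2n+8}{5}$ \cite{Kleitman, Griggs}; and for $\delta\geq 5$ the Griggs--Wu bound \cite{Griggs} gives $\ell_{\max}(G)\geq \frac{n+4}{2}$. Feeding these into Proposition~\ref{prop1} yields $mvc(G)\geq \frac14 n+3$, $mvc(G)\geq \frac{2}{5}n+\frac{13}{5}$, and $mvc(G)\geq \frac12 n+3$, respectively, which are exactly the three claimed inequalities.

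For part $(4)$, which handles general $\delta$, I would use the asymptotic result \cite{Caro1}: every connected graph with minimum degree $\delta$ admits a spanning tree with at least $\left(1-\frac{\ln(\delta+1)}{\delta+1}(1+o_{\delta}(1))\right)n$ leaves. Adding one via Proposition~\ref{prop1} gives the stated bound.

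The proof itself carries essentially no obstacle: all of the genuine combinatorial difficulty is already packaged inside the cited max-leaf spanning tree theorems, and our only task is to check that the ``$+1$'' supplied by Proposition~\ref{prop1} turns each published leaf count into the precise constant term claimed (for instance $2+1=3$, $\frac85+1=\frac{13}{5}$, and again $2+1=3$). The single point requiring care is ensuring that each cited leaf bound is quoted in the \emph{additive} form $\ell_{\max}(G)\geq cn+b$ rather than in a purely leading-order form, since the exact constant terms $+3$, $+\frac{13}{5}$, $+3$ in $(1)$--$(3)$ depend on the additive constants recorded in \cite{Kleitman, Griggs}; for $(4)$ only the leading term matters, so the weaker asymptotic statement of \cite{Caro1} suffices.
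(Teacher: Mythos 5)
Your proposal is correct and is exactly the paper's argument: the paper derives this proposition by combining Proposition \ref{prop1} ($mvc(G)\geq \ell(T)+1$) with the cited max-leaf spanning tree bounds of Kleitman--West, Griggs--Wu, and Caro--West--Yuster, just as you do, and your arithmetic on the additive constants ($2+1=3$, $\frac{8}{5}+1=\frac{13}{5}$, $2+1=3$) matches the stated bounds.
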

We proceed with a lower bound for $mvc(G)$.
\begin{prop}\label{prop2}
Let $G$ be a connected graph with $n$ vertices and diameter $d$.

$(1)$ $mvc(G)=n$ if and if only $d\leq 2$;

$(2)$ If $d\geq 3$, then $mvc(G)\leq n-d+2$ , and the bound is sharp.
\end{prop}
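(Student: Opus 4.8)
The plan is to derive both parts from a single counting principle. First I would record the basic identity that, in any vertex-coloring whose color classes have sizes $|G_{c_1}|,\ldots,|G_{c_k}|$ (summing to $n$), the number of colors used is exactly $k=n-\sum_{i}(|G_{c_i}|-1)$; maximizing the number of colors is therefore the same as minimizing the total number of wasted colors $\sum_i(|G_{c_i}|-1)$. In particular $mvc(G)\le n$ always, with equality precisely when the all-distinct (rainbow) coloring is itself an MVC-coloring.

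The crux of the argument is an observation I would isolate as the key step. Fix any MVC-coloring and let $u,v$ be a diametral pair, so $d(u,v)=d$. By definition there is a vertex-monochromatic $\{u,v\}$-path $P$; since every $\{u,v\}$-path has length at least $d$, the path $P$ has at least $d-1$ internal vertices, and these all receive a common color. Hence some color class has size at least $d-1$, and by itself wastes at least $d-2$ colors. Substituting into the identity gives $mvc(G)\le n-(d-2)=n-d+2$, which is the upper bound in part $(2)$; note that this step uses nothing about $d\ge 3$.

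Part $(1)$ then follows quickly. For sufficiency, if $d\le 2$ then every pair of vertices is joined by a path with at most one internal vertex, so the all-distinct coloring is already an MVC-coloring and $mvc(G)=n$. For necessity I would argue contrapositively: if $d\ge 3$, the bound above gives $mvc(G)\le n-1<n$, so $mvc(G)=n$ forces $d\le 2$.

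The remaining work, and the part I expect to require the most care, is the sharpness of $(2)$. I would exhibit an explicit graph $G$ on $n$ vertices of diameter exactly $d$ attaining $mvc(G)=n-d+2$. A natural candidate is a \emph{spine} path $v_1v_2\cdots v_{d-1}$ to which all of the remaining $n-(d-1)$ vertices are attached as pendants at the two ends $v_1$ and $v_{d-1}$, with at least one pendant at each end. One checks that the diameter is exactly $d$: any pendant at $v_1$ is at distance $d$ from any pendant at $v_{d-1}$, and no pair is farther apart. Then I would color all spine vertices with a single color and give every other vertex a fresh distinct color, yielding $1+(n-(d-1))=n-d+2$ colors. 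Verifying that this is an MVC-coloring is the routine-but-delicate part: every non-spine vertex is adjacent to a spine vertex, so any two vertices can be joined by a path whose internal vertices lie entirely on the spine (hence share the single spine color), while adjacent or common-neighbor pairs need at most one internal vertex. Matching this lower bound against the upper bound from $(2)$ gives $mvc(G)=n-d+2$, establishing sharpness.
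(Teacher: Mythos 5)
Your proof is correct, and its core coincides with the paper's: the upper bound in $(2)$ is obtained exactly as in the paper, by noting that a vertex-monochromatic path between a diametral pair forces a color class of size at least $d-1$, hence at least $d-2$ wasted colors; you merely make explicit the counting identity (number of colors $=n-\sum_i(|G_{c_i}|-1)$) that the paper leaves implicit, and you also spell out part $(1)$, which the paper dismisses as obvious. The only genuine difference is the sharpness witness. The paper attaches a path of length $d-1$ to a vertex $v_0$ of a clique $K_{n-d+1}$ and colors $v_0$ together with the internal path vertices by one color; you instead take a double broom (a spine path on $d-1$ vertices with the remaining $n-d+1$ vertices hung as pendants at its two ends) and color the spine monochromatically. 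Both constructions work for every admissible pair $(n,d)$, since $n\geq d+1$ guarantees a pendant at each end of your spine. Your tree example has the mild advantages that path uniqueness makes the MVC verification immediate and that it shows the bound is already tight among trees (i.e., among graphs with the fewest possible edges), whereas the paper's lollipop shows tightness persists for graphs that are dense outside the attached path; neither buys anything for the proposition as stated, so the choice is a matter of taste.
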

\begin{proof}
(1) holds obviously. For (2),
the vertex-monochromatic path between the two vertices
at distance $d\ (d\geq 3)$ wastes at least $d-2$ colors.
Then $mvc(G)\leq n-d+2$.
For the sharpness, we can take the graph $G_0$
obtained from a copy of $K_{n-d+1}$ by attaching
a path $P$ of length $d-1$ at a vertex $v_0$ in $K_{n-d+1}$.
Clearly, $diam(G_0)=d$.
Give $v_0$ and the internal vertices on $P$ one color,
and each other vertex in $G_0$ a distinct fresh color.
It is easy to check that this vertex-coloring is an MVC-coloring
of $G_0$ using $n-d+2$ colors,
which implies $mvc(G_0)\geq n-d+2$. Thus $mvc(G_0)=n-d+2$.
\end{proof}

\section{Erd\H{o}s-Gallai-type results for $mvc(G)$}

The following problems are called Erd\H{o}s-Gallai-type problems.

\noindent {\bf Problem I:} Given two positive integers $n$, $k$ with
$3\leq k\leq n$, compute the minimum integer $f_v(n,k)$ such that if
a connected graph $G$ satisfies $|V(G)|=n$ and $|E(G)|\geq
f_v(n,k)$, then $mvc(G)\geq k$.

\noindent {\bf Problem II:} Given two positive integers $n$, $k$
with $3\leq k\leq n$, compute the maximum integer $g_v(n,k)$ such
that if a connected graph $G$ satisfies $|V(G)|=n$ and $|E(G)|\leq
g_v(n,k)$, then $mvc(G)\leq k$.

Note that $g_v(n,k)$ does not exist for $3\leq k\leq n-1$,
and $g_v(n,n)=n-1$, since for a star $S_n$ on $n$ vertices, we have $mvc(S_n)=n$.
For this reason, the rest of the section is devoted to studying {\bf Problem I}.

First, we state some lemmas, which are used to determine the value of $f_v(n,k)$.

\begin{lem}\label{lem6}\cite{Ding}
Let $G$ be a connected graph with $|E(G)|\geq |V(G)|+\binom{t}{2}$
and $|V(G)|\neq t+2$. Then $G$ has a spanning tree with at least
$t+1$ leaves, and this is best possible.
\end{lem}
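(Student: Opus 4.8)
The plan is to prove the statement by induction on $t$, working with $L(G)$, the maximum number of leaves over all spanning trees of $G$, and writing $n=|V(G)|$, $m=|E(G)|$; the goal is that $m\ge n+\binom t2$ together with $n\ne t+2$ forces $L(G)\ge t+1$. The small base cases are direct: for $t\le 1$ any spanning tree on at least two vertices already has two leaves, and for $t=2$ the hypothesis $m\ge n+1$ means $G$ is neither a path nor a cycle, so $\Delta(G)\ge 3$ and hence $G$ has a spanning tree with at least three leaves. The engine of the induction is the identity $\binom t2=\binom{t-1}2+(t-1)$: if I can find a vertex $v$ that is (a) non-cut, so $G-v$ stays connected, and (b) of degree $\deg(v)\le t$, then $G':=G-v$ satisfies $|E(G')|=m-\deg(v)\ge (n-1)+\binom{t-1}2$, and crucially $|V(G')|=n-1\ne (t-1)+2$ precisely because $n\ne t+2$. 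Thus the inductive hypothesis (the statement for $t-1$) yields a spanning tree $T'$ of $G'$ with at least $t$ leaves.

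Such a vertex is easy to locate when it exists: every leaf of any spanning tree of $G$ is a non-cut vertex, since deleting it leaves a smaller spanning tree. So I take a spanning tree realizing $L(G)$, assume $L(G)\le t$ (otherwise we are done), and inspect its at most $t$ leaves. If one of them, say $v$, has $\deg_G(v)\le t$, I run the reduction and then reattach $v$ to $T'$ through one of its edges back into $G'$. Attaching $v$ creates a new leaf; the leaf count fails to reach $t+1$ only when $v$'s chosen neighbour was itself a leaf of $T'$ that is thereby demoted to an internal vertex. I would first try to anchor $v$ at an internal vertex of $T'$ (giving $t+1$ leaves at once); the residual situation, in which every neighbour of $v$ in $G'$ is a leaf of $T'$, must be handled by a local rerouting that uses a second neighbour of $v$ to recover the demoted leaf. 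Guaranteeing a net gain in this swap is the first delicate point.

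The genuinely hard part will be the complementary dense case, where no admissible low-degree non-cut vertex exists, i.e.\ every leaf of a maximum-leaf spanning tree has degree at least $t+1$. Here the reduction is unavailable and one must instead exploit the resulting abundance of chords (each such leaf carries at least $t$ non-tree edges) to reroute the tree and manufacture an extra leaf directly. This is exactly the regime where the hypothesis $n\ne t+2$ is indispensable: on $t+2$ vertices the graph $K_{t+2}$ minus a perfect matching (for $t+2$ even) has minimum degree $t$, satisfies $m\ge n+\binom t2$ once $t\ge 4$, yet has no spanning star and so $L(G)=t$. Any correct treatment of the dense case must therefore break down precisely at $n=t+2$, so I expect to invoke $n\ne t+2$ to secure a vertex missing some neighbour that can serve as the anchor for a new leaf. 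I anticipate this case to demand the most careful structural analysis and to be where the bulk of the work lies.

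Finally, for ``best possible'' I would exhibit $K_{t+1}$: it is connected, has $n=t+1\ne t+2$ and $m=\binom{t+1}2=n+\binom t2-1$, and since it has only $t+1$ vertices every spanning tree has at most $t$ leaves, so $L(K_{t+1})=t$. Hence with one fewer edge than the threshold the conclusion already fails, which shows the excess bound $\binom t2$ on $m-n$ cannot be decreased.
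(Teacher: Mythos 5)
You should first note that the paper itself never proves this lemma: it is quoted directly from Ding, Johnson and Seymour \cite{Ding}, so there is no in-paper argument to compare against, and a blind proof attempt here amounts to reproving an entire journal paper. Your proposal is a plan, not a proof, and the two steps you yourself flag as open are genuine gaps that carry essentially all of the difficulty. Take first the reattachment step. When every $G'$-neighbour of the deleted vertex $v$ is a leaf of the inductively obtained tree $T'$, none of the local moves you gesture at reaches $t+1$ leaves: attaching $v$ pendant at a leaf $u$ trades the leaf $u$ for the leaf $v$ (count stays at $t$), and the rerouting through a second neighbour $w$ --- add $uv$ and $vw$, then delete a tree edge of the unique created cycle --- makes $w$ internal while creating at most one new leaf, so the count is again at most $t$ (possibly $t-1$). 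The inductive hypothesis only guarantees \emph{some} tree with $t$ leaves; it gives you no right to assume $T'$ has an internal vertex adjacent to $v$, or a spare leaf. So ``guaranteeing a net gain in this swap'' is not a deferred detail but a missing argument.

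The second gap is more serious: the dense case, in which every leaf of a maximum-leaf spanning tree has degree at least $t+1$ in $G$, is precisely the substance of the Ding--Johnson--Seymour theorem, and you leave it entirely unaddressed (``where the bulk of the work lies''). Your example of $K_{t+2}$ minus a perfect matching is a correct diagnosis of \emph{why} any treatment of this case must invoke $n\neq t+2$, but locating where a hypothesis must enter is not the same as producing an argument that uses it; as written, nothing is proved in this case. A minor further remark: your sharpness witness $K_{t+1}$ is legitimate but degenerate, since on $t+1$ vertices no spanning tree can have $t+1$ leaves for trivial counting reasons. The bound is in fact tight for every admissible $n$: take the complete graph $K_{t+1}$ with one edge replaced by a long path (the construction of Lemma \ref{lem9} with parameter $t-1$), which has $n+\binom{t}{2}-1$ edges and, by Lemma \ref{lem9} together with Proposition \ref{prop1}, no spanning tree with more than $t$ leaves; that is the kind of example ``best possible'' refers to.
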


\begin{lem}\label{lem7}\cite{Harary}
The maximum diameter among all connected graphs with $n$ vertices and $m$ edges
is $(n-1)-x(p)+y(p)$, where $p=m-n+1$, $x(p)=\Big\lceil\frac{1+\sqrt{1+8p}}{2}\Big\rceil$,
$y(p)=1$ if $p=\binom{t}{2}$ for some $t$, and $y(p)=2$ otherwise.
\end{lem}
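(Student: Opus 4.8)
The plan is to treat $p=m-n+1$ (the number of independent cycles) as the governing parameter and to read $x(p)=\lceil(1+\sqrt{1+8p})/2\rceil$ as the least integer $t$ with $\binom{t}{2}\ge p$; the parity term $y(p)$ will only surface at the very end, when an edge-count inequality is converted back into a diameter bound. I would establish the lemma in the two usual directions: a construction showing the stated value is attainable, and a counting argument showing that no connected graph on $n$ vertices with $m$ edges can exceed it.

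For attainability I would exhibit explicit extremal graphs. Writing $x=x(p)$: if $p=\binom{x}{2}$, I take a clique $K_{x+1}$ and attach a pendant path through the remaining $n-x-1$ vertices; one checks the cyclomatic number is exactly $\binom{x}{2}=p$ and that the farthest pair (a clique vertex off the path versus the path end) is at distance $n-x=(n-1)-x+1$. If $\binom{x-1}{2}<p<\binom{x}{2}$, I take $K_x$, add one vertex $w$ joined to $k=p-\binom{x-1}{2}+1$ clique vertices (so $2\le k\le x-1$), and again hang a pendant path off a clique vertex not adjacent to $w$; the cyclomatic number is $p$ and the distance from $w$ to the path end is $n-x+1=(n-1)-x+2$. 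Matching these against $y(p)$ yields exactly $(n-1)-x(p)+y(p)$ in both cases.

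The upper bound is the substantive part. I would fix a graph $G$ with $\mathrm{diam}(G)=D$, choose a vertex $u$ of eccentricity $D$, and run breadth-first search to partition $V(G)$ into layers $L_0,\dots,L_D$ of sizes $n_i\ge 1$ with $n_0=1$ and $\sum_i n_i=n$. Since every edge lies inside a layer or between consecutive layers, decomposing $\binom{n}{2}$ into within-, adjacent- and distant-layer pairs gives $m\le\binom{n}{2}-\sum_{j\ge i+2}n_in_j$. Maximizing $m$ over all diameter-$D$ graphs thus reduces to minimizing the distant-pair penalty $\sum_{j\ge i+2}n_in_j$ over compositions of $n$ into $D+1$ positive parts. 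An exchange argument (moving a vertex between adjacent layers and tracking the sign of the resulting change) shows the surplus $n-(D+1)$ should be pushed into two consecutive layers at an end of the ordering, i.e.\ into the clique-plus-path profile $n_1=n-D$, $n_0=n_2=\cdots=n_D=1$. Evaluating the penalty there gives $m\le\binom{n-D+1}{2}+n-2$, equivalently $p\le\binom{n-D+1}{2}-1$.

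Finally I would invert this inequality. From $\binom{n-D+1}{2}\ge p+1$ it follows that $n-D+1\ge t$ for the least $t$ with $\binom{t}{2}\ge p+1$; this least $t$ equals $x(p)+1$ when $p$ is triangular and $x(p)$ otherwise, which is exactly the dichotomy recorded by $y(p)$, and in both cases one obtains $D\le(n-1)-x(p)+y(p)$. Together with the constructions this gives equality. The main obstacle I anticipate is the integer optimization minimizing $\sum_{j\ge i+2}n_in_j$: the exchange step is not monotone in the interior, so one must argue carefully that the minimum is attained at the boundary, and one must track the triangular-number case so that the ceiling in $x(p)$ and the value of $y(p)$ come out correctly.
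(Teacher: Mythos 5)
There is nothing in the paper to compare your argument against: the paper does not prove this lemma at all; it quotes it from Harary's 1962 paper via the citation \cite{Harary} and then uses it as a black box (in Lemma \ref{lem10} and Theorem \ref{thm4}). So yours is a self-contained derivation of a result the authors treat as known, and on its own merits it is correct. Your two extremal constructions do have cyclomatic number $p$ and diameter $(n-1)-x(p)+1$, respectively $(n-1)-x(p)+2$ (the bounds $2\le k\le x-1$ guarantee both connectivity and the existence of a clique vertex nonadjacent to $w$ on which to hang the path), and your final inversion of $p\le\binom{n-D+1}{2}-1$ into the $x(p),y(p)$ dichotomy is exactly right, including the shift from $p$ to $p+1$ at triangular numbers. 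The one correction concerns the step you flag as the main obstacle: the minimization of the penalty $\sum_{j\ge i+2}n_in_j$ needs no exchange argument, and your description of the minimizer is in fact wrong as stated --- the optimum is not attained only at the boundary profile $n_1=n-D$; blowing up any single internal layer, or any two adjacent internal layers, gives the same penalty (for $D=4$, $n=7$, the profiles $(1,3,1,1,1)$ and $(1,1,3,1,1)$ both give penalty $10$). The clean argument: write $n_i=1+a_i$ with $a_i\ge 0$, $a_0=0$, $\sum_i a_i=n-D-1$; then
\begin{align*}
\sum_{j\ge i+2}n_in_j=\binom{D}{2}+\sum_{i}c_ia_i+\sum_{j\ge i+2}a_ia_j\ \ge\ \binom{D}{2}+(D-2)(n-D-1),
\end{align*}
where $c_i$, the number of layers at distance at least two from layer $i$, equals $D-2$ for internal $i$ and $D-1$ for $i=D$, and the cross terms are nonnegative. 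This lower bound is exactly the penalty of your clique-plus-path profile, so $m\le\binom{n}{2}-\binom{D}{2}-(D-2)(n-D-1)=\binom{n-D+1}{2}+n-2$ follows in one line, and the remainder of your proof goes through unchanged.
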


\begin{lem}\label{lem8}
Let $C_n$ be a cycle of order $n$. Then
\begin{align*}
mvc(C_n)=
\begin{cases}
n & n\leq 5\\
3 & n\geq 6
\end{cases}
\end{align*}
\end{lem}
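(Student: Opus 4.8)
The plan is to treat the two ranges separately, using the general bounds established above. For $n\le 5$ the cycle $C_n$ has diameter $d=\lfloor n/2\rfloor\le 2$, so Proposition~\ref{prop2}(1) immediately gives $mvc(C_n)=n$. For $n\ge 6$ I will prove the two inequalities $mvc(C_n)\ge 3$ and $mvc(C_n)\le 3$ separately. The lower bound is free: every spanning tree of $C_n$ is a Hamiltonian path, which has exactly two leaves, so Proposition~\ref{prop1} yields $mvc(C_n)\ge \ell(T)+1=3$.

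For the upper bound I would first reduce to a clean combinatorial picture. By the remark preceding Proposition~\ref{prop1}, an extremal MVC-coloring may be taken so that each color class induces a connected subgraph; on a cycle a connected induced subgraph is a sub-path, so such a coloring is exactly a partition of $C_n$ into $k=mvc(C_n)$ consecutive arcs $A_1,\dots,A_k$, each of a distinct color. Moreover, between any two vertices of a cycle there are precisely two simple paths (the two arcs), and a $\{u,v\}$-arc is a vertex-monochromatic path if and only if all of its internal vertices lie in a single color class. Hence the coloring is an MVC-coloring if and only if for every pair $\{u,v\}$ at least one of the two arcs has its interior contained in a single $A_i$.

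Now I would argue by contradiction, assuming $k\ge 4$, and exhibit a pair $\{u,v\}$ violating this criterion. Because $n\ge 6$, the diameter is $d=\lfloor n/2\rfloor\ge 3$, so there exist pairs $\{u,v\}$ for which both arcs have length at least $3$; for such a pair each interior contains at least two vertices, and it suffices to make each interior meet at least two color classes. Since $k\ge 4$ there are at least four color-boundaries around the cycle, and the idea is to choose $u$ and $v$ (for instance one vertex in $A_1$ and one in a non-adjacent arc such as $A_3$) so that one open arc sweeps through $A_2$ together with part of a neighbouring class while the other sweeps through $A_4,\dots,A_k$; in both directions the interior then contains a color-boundary strictly between $u$ and $v$, so neither arc is vertex-monochromatic, contradicting the MVC property. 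Combining this with the lower bound gives $mvc(C_n)=3$.

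The main obstacle is the existence of this bad pair when the color classes are small, since a singleton or size-$2$ class can force the chosen interior to be swallowed by a single color class. I expect to handle this by a short case analysis on the arc sizes: if some arc has size at least $3$, one can pick $u,v$ in the interiors of arcs so that no boundary edge is incident to $u$ or $v$, and then any boundary lying on a given side is automatically interior to that arc; if every arc has size at most $2$ (so there are many boundaries), a shifting argument over the candidate distance-$3$ pairs produces one whose two interiors each straddle a boundary. Verifying that one of these choices always succeeds, uniformly in the arc-size profile, is the only delicate point; everything else is immediate from Propositions~\ref{prop1} and~\ref{prop2}.
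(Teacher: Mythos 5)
Your overall route is genuinely different from the paper's: the paper (for $n\ge 8$) takes the vertex-monochromatic path $P$ joining an antipodal pair, observes that this forces roughly half of the cycle into one color class, then picks three vertices of three further colors on the remaining half and exhibits a pair (namely $v_2$ and the middle one, $v_j$) neither of whose two arcs is vertex-monochromatic; the cases $n\in\{6,7\}$ are checked directly. Your reduction to a partition of $C_n$ into $k$ consecutive arcs, with MVC equivalent to ``for every pair at least one open arc lies inside a single color class,'' is correct and clean, and your treatment of $n\le 5$ and of the lower bound via Propositions~\ref{prop2} and~\ref{prop1} is fine. However, as written there is a genuine gap: the existence of the ``bad pair'' for an \emph{arbitrary} partition into $k\ge 4$ arcs is exactly the content of the upper bound, and you never establish it. Your two concrete recipes both fail on small examples: for arc sizes $(1,1,1,3)$ on $C_6$, choosing $u\in A_1$, $v\in A_3$ gives one interior consisting of the single vertex of $A_2$ (so that arc \emph{is} vertex-monochromatic), and the fallback ``pick $u,v$ so that no boundary edge is incident to $u$ or $v$'' is impossible there, since only the middle vertex of the size-$3$ arc has that property; the ``shifting argument'' for the all-sizes-$\le 2$ case is not described at all. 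So the proposal is a plan whose crucial lemma is still open.

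The gap is fillable along the lines you sketch, and here is the missing lemma in a form that closes it. It suffices to find a contiguous union $B$ of color arcs with $3\le |B|\le n-3$: writing the cycle as $b_1\cdots b_s c_1\cdots c_{n-s}$ with $B=\{b_1,\dots,b_s\}$, the edges $b_s c_1$ and $c_{n-s}b_1$ are boundary edges, and the pair $u=b_2$, $v=c_2$ has one of these edges (with both endpoints) strictly inside each of its two open arcs, so neither $\{u,v\}$-path is vertex-monochromatic, contradicting MVC. Such a $B$ exists whenever $k\ge 4$ and $n\ge 6$: if some arc has $|A_i|\ge 3$, then also $|A_i|\le n-3$ (otherwise the remaining $k-1\ge 3$ arcs would contain at most $2$ vertices), so take $B=A_i$; if all arcs have size $\le 2$ and both sizes occur, take $B$ to be a size-$2$ arc together with an adjacent size-$1$ arc, so $|B|=3\le n-3$; if all sizes are $1$, take three consecutive arcs; if all sizes are $2$, take two consecutive arcs (then $|B|=4$ and $n=2k\ge 8$). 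With this inserted, your argument is complete and even slightly stronger than the paper's in one respect: it treats all $n\ge 6$ uniformly, whereas the antipodal-path argument needs $n\ge 8$ and leaves $n\in\{6,7\}$ as a separate verification.
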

\begin{proof}
For $n\leq 5$, we know $diam(C_n)\leq 2$, and thus $mvc(C_n)=n$. For
$n\in\{6,7\}$, it is easy to check that $mvc(C_n)=3$. For $n\geq 8$,
by Proposition \ref{prop1}, it suffices to prove that $mvc(G)\leq
3$. By contradiction, we assume that $mvc(C_n)\geq 4$. Let $f$ be an
extremal $MVC$-coloring of $C_n$, and $f(v_i)$ the color of vertex
$v_i$. Let $V(C_n)=\{v_1,v_2,\ldots,v_{n}\}$. Since $G$ is
monochromatically vertex-connected, for the pair of antipodal
vertices $\{v_n,v_{\lfloor\frac{n}{2}\rfloor}\}$, there exists a
vertex-monochromatic path $P$ of length at least
$\lfloor\frac{n}{2}\rfloor\geq 4$ connecting them. Without loss of
generality, suppose $P=v_nv_1\ldots v_{\lfloor\frac{n}{2}\rfloor}$.
Then $f(v_1)=f(v_3)$, and we can find three vertices $v_i,\ v_j,\
v_{\ell}\ (\lfloor\frac{n}{2}\rfloor\leq i<j<\ell\leq n)$ with three
different colors but color $f(v_1)$. Then there exist no
vertex-monochromatic paths connecting $v_2$ and $v_j$, a
contradiction.
\end{proof}

\begin{lem}\label{lem9}
Let $G$ be the graph obtained from a complete graph on $\{v_1,\ldots,v_{t+2}\}$ by
replacing the edge $v_{t+1}v_{t+2}$ with a path $P_0=v_{t+2}v_{t+3}\ldots,v_{n}v_{t+1}$.
Then $mvc(G)\leq t+2$ for $1\leq t\leq n-5$.
\end{lem}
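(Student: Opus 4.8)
The plan is to regard $G$ as a single long cycle decorated by a clique, and to push an arbitrary $MVC$-coloring down onto that cycle so that Lemma~\ref{lem8} applies. Relabel the subdivided edge as $u_0=v_{t+2},u_1=v_{t+3},\dots,u_{m-1}=v_n,u_m=v_{t+1}$ with $m=n-t-1$, and set $K=\{v_1,\dots,v_t\}$. Each vertex of $K$ is adjacent to both \emph{gateways} $u_0,u_m$ and to every other vertex of $K$, whereas the \emph{interior} vertices $u_1,\dots,u_{m-1}$ have their only neighbours on the path. Deleting $v_1,\dots,v_{t-1}$ leaves the cycle on $v_t,u_0,u_1,\dots,u_m$, which has length $n-t+1$; the hypothesis $t\le n-5$ gives $n-t+1\ge 6$, so Lemma~\ref{lem8} yields $mvc=3$ for it. I fix an extremal $MVC$-coloring $f$ of $G$ and bound its number of colors, recalling that we may assume each color class induces a connected subgraph.

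The only way a vertex-monochromatic path between two path-vertices can use $K$ is to enter at one gateway and leave at the other, and this \emph{clique bridge} is monochromatic exactly when $f(u_0)=f(u_m)=c$ and some $v_k\in K$ has $f(v_k)=c$; I call such a $c$ a \emph{bridging color}, and note there is at most one, since it forces $c=f(u_0)$. Suppose first that a bridging color $c_0$ exists, witnessed by $v_{k_0}$. I claim that $f$ restricted to the cycle $\hat C$ on $v_{k_0},u_0,\dots,u_m$ is an $MVC$-coloring of $\hat C$. Indeed, in any monochromatic $G$-path between two vertices of $\hat C$, every clique bridge it uses joins $u_0$ to $u_m$ with both of them internal, so its color equals $f(u_0)=f(u_m)=c_0$ and it can be rerouted through $v_{k_0}$; a first or last step into $K$ from the endpoint $v_{k_0}$ is shortcut by the edge from $v_{k_0}$ to a gateway. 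Hence $f$ uses at most $mvc(\hat C)=3$ colors on $V(\hat C)$, while the remaining $t-1$ vertices of $K$ add at most $t-1$ colors, for a total of at most $t+2$.

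Suppose instead that there is no bridging color. Then no clique bridge between two interior vertices is monochromatic, so every interior--interior pair must be joined along the path; applying this to $u_1,u_{m-1}$ forces $u_2,\dots,u_{m-2}$ to share a single color $Y$, and joining a deep interior vertex to $K$ then forces, up to the left--right symmetry, $f(u_0)=f(u_1)=Y$, so that $u_0,u_1,\dots,u_{m-2}$ are all colored $Y$. If in addition $f(u_{m-1})=Y$, then the path carries only the colors $Y$ and $f(u_m)$ and we are done. Otherwise joining $u_1$ to the far gateway $u_m$ cannot be done along the path, so it must use a clique vertex, which is thereby forced to have color $Y$; thus the path carries at most the three colors $Y,f(u_{m-1}),f(u_m)$ while one vertex of $K$ already repeats $Y$, and again at most $3+(t-1)=t+2$ colors occur.

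The main obstacle is the accounting in the second case: one must check that every clique vertex genuinely needed as a bridge is thereby forced to reuse a color already charged to the path, so that $K$ never contributes more than $t-1$ colors beyond the three living on the cycle. A secondary technical point is to make the rerouting in the first case fully rigorous for paths having a gateway as an endpoint, and to settle by direct inspection the boundary cases $m\in\{4,5\}$ (that is, $t\in\{n-6,n-5\}$), where the monochromatic block $u_2,\dots,u_{m-2}$ is too short to drive the argument above and the same bridging dichotomy must be applied by hand.
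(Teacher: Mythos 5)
Your overall strategy---push the coloring onto the cycle of length $n-t+1\ge 6$ through one clique vertex, invoke Lemma~\ref{lem8} to get at most $3$ colors there, and charge the remaining $t-1$ clique vertices one color each---is the same as the paper's, and your Case~A (a bridging color exists) parallels the paper's treatment of the case $f(v_{t+1})=f(v_{t+2})$; the rerouting issue you flag for paths with a gateway endpoint is minor, since any such path has the other gateway internal and hence already carries the bridging color. The genuine gap is in Case~B. Your conclusion that, absent a bridging color, $u_0,u_1,\dots,u_{m-2}$ all receive the color $Y$ is obtained by connecting a clique vertex to a \emph{deep} interior vertex $u_j$: for the route via $u_0$ to force $f(u_0)=f(u_1)=Y$ the internal segment $u_0,\dots,u_{j-1}$ must contain a vertex of the $Y$-block, i.e.\ $j\ge 3$, and symmetrically the route via $u_m$ forces $f(u_m)=f(u_{m-1})=Y$ only when $j\le m-3$; so the dichotomy you rely on needs some $j$ with $3\le j\le m-3$, which exists only for $m\ge 6$. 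When $m\in\{4,5\}$, i.e.\ $t\in\{n-5,n-6\}$---values squarely inside the lemma's hypothesis $1\le t\le n-5$, and exactly the ones Theorem~\ref{thm4} also uses---the forcing step simply fails: for $m=5$, for instance, the pair $\{v_k,u_2\}$ is already served by the segment $u_0u_1$ as soon as $f(u_0)=f(u_1)$, with no constraint tying that common color to $Y$, so the path may a priori carry four distinct colors and the bound then needs \emph{two} forced repetitions inside $K$ (coming from pairs such as $\{u_1,u_m\}$ and $\{u_{m-1},u_0\}$), an analysis you never perform. You acknowledge this yourself, but only by announcing that these boundary cases ``must be applied by hand''; a proof that ends by listing what remains to be checked is not complete, and here what remains is not routine bookkeeping but a separate case analysis.

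It is worth seeing how the paper avoids this trap entirely. Its case split is on whether some pair of path vertices \emph{needs} the clique, and in the hard configuration---the two gateways have distinct colors, say red and blue, and pairs needing the clique exist on both sides---it does not count colors at all: it recolors every blue vertex except one clique vertex $v_{i_1}$ red, verifies that the result is still an extremal $MVC$-coloring (any pair that formerly needed a blue path through $v_{i_1}$ is rerouted through the red clique vertex $v_{i_2}$), and thereby reduces to the case where the gateways share a color, i.e.\ to your Case~A. That recoloring reduction is uniform in $m$ and never requires a deep interior vertex. So to repair your argument, either carry out the $m\in\{4,5\}$ inspection explicitly along the lines sketched above, or replace your Case~B by such a recoloring reduction to Case~A.
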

\begin{proof}
Suppose that $f$ is an extremal $MVC$-coloring of $G$, and $f(v_i)$
is the color of the vertex $v_i$. Let $V_1=\{v_1,\ldots,v_t\}$, and
$V_2=V(G)\setminus V_1$. Denote by $S$ the set of all pairs
$\{v_{j},v_{\ell}\}$ of vertices in $V_2$ except
$\{v_{t+1},v_{t+2}\}$, such that all the vertex-monochromatic
$\{v_{j},v_{\ell}\}$-paths contain some vertex in $V_1$. We call a
path with color $c$, if all the internal vertices on the path are
colored by $c$.

Case 1: $S=\emptyset$.

Then for each pair $\{v_{j},v_{\ell}\}$ of vertices in $V_2$ except
$\{v_{t+1},v_{t+2}\}$, all the vertex-monochromatic
$\{v_{j},v_{\ell}\}$-paths are contained in $P_0$. Let $v_i$ be any
vertex in $V_1$. For each $v_j\in V_2$, the shortest
vertex-monochromatic $\{v_{i},v_{j}\}$-paths must be
$P=v_iv_{t+1}v_n\cdots v_j$ or $P=v_iv_{t+2}v_{t+3}\cdots v_j$,
which is contained in the cycle $C^i=v_iv_{t+2}\cdots
v_nv_{t+1}v_i$. For $\{v_{t+1},v_{t+2}\}$, $P=v_{t+1}v_iv_{t+2}$ is
a vertex-monochromatic $\{v_{t+1},v_{t+2}\}$-path contained in
$C^i$. Thus $f$ induces an $MVC$-coloring of a cycle
$C^i=v_iv_{t+2}\ldots v_nv_{t+1}v_i$ for each $v_i\in V_1$.

Case 2: $S\neq \emptyset$.

Then for $\{v_j,v_{\ell}\}\in S$ with $j>\ell$, the shortest
vertex-monochromatic $\{v_j,v_{\ell}\}$-paths must be $P=v_j\cdots
v_{t+1}v_iv_{t+2}\cdots v_{\ell}$, where $v_i$ is some vertex in
$V_1$ with $f(v_i)=f(v_{t+1})$ or $f(v_{t+2})$.

Suppose first $f(v_{t+1})=f(v_{t+2})$. Then we can find a vertex
$v_i$ in $V_1$ such that $f(v_i)=f(v_{t+1})=f(v_{t+2})$. Such vertex
$v_i$ must exist; otherwise there are no vertex-monochromatic paths
connecting the pairs of vertices in $S$. For each
$\{v_j,v_{\ell}\}\in S$, $P=v_j\cdots v_{t+1}v_iv_{t+2}\cdots
v_{\ell}$ is a vertex-monochromatic $\{v_j,v_{\ell}\}$-path. With
similar arguments as in Case 1, we get that $f$ induces an
$MVC$-coloring of the cycle $C^i=v_iv_{t+2}\ldots v_nv_{t+1}v_i$.

Now suppose $f(v_{t+1})\neq f(v_{t+2})$, say $f(v_{t+1})=red,
f(v_{t+2})=blue$. Then for $\{v_{j},v_{\ell}\}\in S$, exactly one of
$v_{j},v_{{\ell}}$ must be $v_{t+1}$ or $v_{t+2}$; otherwise, the
vertex-monochromatic $\{v_j,v_{\ell}\}$-paths contain both $v_{t+1}$
and $v_{t+2}$ as internal vertices, but $f(v_{t+1})\neq f(v_{t+2})$,
a contradiction. For $i\in \{1,2\}$, let $S_i$ be the set of pairs
of vertices in $S$ containing $v_{t+i}$. If one of $S_1,S_2$ is
empty, say $S_1\neq \emptyset$ and $S_2= \emptyset$, then we assume
that $\{v_{t+1},v_{\ell}\}\in S_1$ and $P=v_{t+1}v_iv_{t+2}\cdots
v_{\ell}$ is a vertex-monochromatic $\{v_{t+1},v_{\ell}\}$-path,
where $v_{i}\in V_1$. Obviously, $P$ is with color blue. For each
$\{v_{t+1},v_j\}\in S_1(=S)$, $P'=v_{t+1}v_iv_{t+2}\cdots v_{j}$ is
a vertex-monochromatic $\{v_{t+1},v_{j}\}$-path. With similar
arguments as in Case 1, we get that $f$ induces an $MVC$-coloring of
the cycle $C^i=v_iv_{t+2}\ldots v_nv_{t+1}v_i$.

Now consider the case $S_1\neq \emptyset$ and $S_2\neq \emptyset$.
Assume that $\{v_{t+1},v_{{\ell}_1}\}\in S_1$ and $\{v_{t+2},v_{{\ell}_2}\}\in S_2$.
Let $P_1=v_{t+1}v_{i_1}v_{t+2}\cdots v_{{\ell}_1}$
(resp. $P_2=v_{t+2}v_{i_2}v_{t+1}\cdots v_{{\ell}_2}$)
be a vertex-monochromatic path connecting $\{v_{t+1},v_{{\ell}_1}\}$
(resp. $\{v_{t+2},v_{{\ell}_2}\}$),
where $v_{i_1}\in V_1,\ v_{i_2}\in V_1$.
Obviously, $P_1$ is with color blue, while $P_2$ is with color red.
We claim that ${\ell}_2\geq{\ell}_1-1$.
Otherwise, both $P_1$ and $P_2$ contain $v_{\ell_1-1}$ as an internal vertex,
but $P_1$ and $P_2$ are with different colors, a contradiction.
Now we recolor all the vertices in $G$ colored by blue except
$v_{i_1}$ by red, and get a new vertex-coloring $f'$. Next we will
show that $f'$ is still an extremal $MVC$-coloring. It suffices to
consider the pairs of vertices which only have vertex-monochromatic
paths with color blue in $f$. Let $\{x,y\}$ be such a pair, and $P$
a shortest vertex-monochromatic $\{x,y\}$-path with color blue in
$f$. If $P$ does not contain $v_{i_1}$ as an internal vertex, then
$P$ is a vertex-monochromatic $\{x,y\}$-path with color red in $f'$.
Otherwise, $P$ must have the form $(x=)v_{t+1}v_{i_1}v_{t+2}\cdots
v_q(=y)$ ($t+3\leq q\leq n$). Now take the path $P':
(x=)v_{t+1}v_{i_2}v_{t+2}\cdots v_q(=y)$, which is a
vertex-monochromatic $\{x,y\}$-path with color red in $f'$. Thus
$f'$ is an extremal $MVC$-coloring of $G$, in which the vertices
$v_{t+1},v_{t+2}$ receive the same color. This is the case we have
discussed.

Therefore we come to the conclusion that
there exists an extremal $MVC$-coloring of $G$, which
induces an $MVC$-coloring of a cycle $C^i=v_iv_{t+2}\ldots v_nv_{t+1}v_i$
for some $v_i\in V_1$.
Since the cycle $C^i$ has length $n-t+1\geq6$,
we have $mvc(C^i)=3$ by Lemma \ref{lem8}.
So $mvc(G)\leq (t-1)+mvc(C^i)=t+2$.
\end{proof}

\begin{lem}\label{lem10}
Let $G$ be a connected graph with $n$ vertices and $m=\binom{n-2}{2}+2$ edges.
Then $mvc(G)\geq n-1$, and this bound is sharp.
\end{lem}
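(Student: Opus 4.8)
The plan is to prove the two assertions separately. For sharpness I would reuse the graph from Proposition~\ref{prop2}: take $G_0$ to be a copy of $K_{n-2}$ with a path of length $2$ attached at one of its vertices. Then $|V(G_0)|=n$, $|E(G_0)|=\binom{n-2}{2}+2$, and $diam(G_0)=3$, so Proposition~\ref{prop2}(2) gives $mvc(G_0)=n-3+2=n-1$; this shows the bound cannot be pushed up to $n$. For the lower bound I would first pin down the diameter via Lemma~\ref{lem7}. With $m=\binom{n-2}{2}+2$ one computes $p=m-n+1=\binom{n-3}{2}$, hence $y(p)=1$ and (since $1+8p=(2n-7)^2$) $x(p)=n-3$, so the maximum possible diameter is $(n-1)-(n-3)+1=3$. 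Thus every admissible $G$ has $diam(G)\le 3$, and if $diam(G)\le 2$ then $mvc(G)=n\ge n-1$ by Proposition~\ref{prop2}(1). All the difficulty is therefore concentrated in the case $diam(G)=3$.

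To treat $diam(G)=3$, I would pass to the complement $\overline{G}$, which has exactly $\binom{n}{2}-\binom{n-2}{2}-2=2n-5$ edges. I call an edge $xy$ a \emph{dominating edge} if every vertex of the graph is equal or adjacent to $x$ or to $y$. Two elementary complementation facts drive the argument: (i) $G$ has a pair of vertices at distance $\ge 3$ if and only if $\overline{G}$ has a dominating edge; and (ii) $G$ has a dominating edge if and only if $\overline{G}$ has two vertices at distance $\ge 3$ (equivalently, $\overline{G}$ is disconnected or has diameter $\ge 3$). The target is a dominating edge of $G$: given one, say $\alpha\beta$, I attach each remaining vertex to a neighbour in $\{\alpha,\beta\}$, obtaining a spanning double star, i.e.\ a spanning tree with $n-2$ leaves, and Proposition~\ref{prop1} yields $mvc(G)\ge(n-2)+1=n-1$.

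To produce that dominating edge I would argue by contradiction. If $G$ has no dominating edge, fact (ii) forces $\overline{G}$ to be connected with $diam(\overline{G})\le 2$; but $diam(G)=3$ together with fact (i) says $\overline{G}$ \emph{does} have a dominating edge. The contradiction comes from a counting sub-lemma I would prove on the side: \emph{any graph $H$ on $n$ vertices with $diam(H)\le 2$ that possesses a dominating edge has at least $2n-4$ edges.} Granting this, $\overline{G}$ would need at least $2n-4$ edges, contradicting $|E(\overline{G})|=2n-5$. For the sub-lemma, let $uw$ be the dominating edge of $H$ and split the other $n-2$ vertices into $P$ (adjacent to $u$ but not $w$), $Q$ (adjacent to $w$ but not $u$), and $C$ (adjacent to both). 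The edges meeting $\{u,w\}$ number exactly $n-1+|C|$. For the edges $F$ lying inside $P\cup Q\cup C$, the diameter-$2$ condition forces all of $P\cup Q$ into a single component of $F$: a vertex $p\in P$ and a vertex $q\in Q$ cannot use $u$ or $w$ as a common neighbour, so if they lay in different components of $F$ they would be non-adjacent with no common neighbour, i.e.\ at distance $\ge 3$. Since each $p$ (resp.\ $q$) also needs an $F$-neighbour merely to reach $w$ (resp.\ $u$) in two steps, no such vertex is isolated in $F$, and hence $|F|\ge|P|+|Q|-1$. Summing, $|E(H)|\ge(n-1+|C|)+(|P|+|Q|-1)=2n-4$, as needed.

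The main obstacle is precisely this $diam(G)=3$ case, and it is delicate for a concrete reason: Ding's bound (Lemma~\ref{lem6}) falls exactly one edge short here. Taking $t=n-3$ it would require $|E(G)|\ge n+\binom{n-3}{2}=\binom{n-2}{2}+3$, whereas we only have $\binom{n-2}{2}+2$, so Lemma~\ref{lem6} by itself delivers only a spanning tree with $n-3$ leaves and hence only $mvc(G)\ge n-2$. Gaining the final colour is the entire content of the lemma, and closing that one-edge gap is exactly what the complement/counting argument above is designed to do.
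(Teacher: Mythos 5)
Your overall strategy is genuinely different from the paper's proof, which proceeds by analyzing how the distance-$3$ pairs intersect (degree-sum counting when two such pairs are disjoint or form a "triangle", and a delicate analysis of extremal colorings wasting exactly two colors when all pairs share a vertex). Your skeleton --- reduce to $diam(G)=3$ via Lemma~\ref{lem7}, then force a dominating edge of $G$ by counting in the complement, then build a spanning double star --- is sound in outline and would be much shorter. However, your counting sub-lemma is false as stated: the star $K_{1,n-1}$ has diameter $2$, possesses a dominating edge (any edge at the center), and has only $n-1$ edges, far below $2n-4$; more generally, any graph with a dominating vertex defeats it. The break in your proof of the sub-lemma is twofold. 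First, your argument that $P\cup Q$ lies in a single component of $F$ only compares a vertex of $P$ with a vertex of $Q$, so it says nothing when one of $P,Q$ is empty (for the star with dominating edge $uw$, $u$ the center, one gets $Q=C=\emptyset$ and $F=\emptyset$). Second, the non-isolation claim is simply wrong: a vertex $p\in P$ does \emph{not} need an $F$-neighbour to reach $w$, because $p\,u\,w$ is already a path of length $2$ ($p\sim u$ by the definition of $P$, and $u\sim w$). So nothing in your argument prevents $F=\emptyset$, and the count collapses.

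The gap is repairable precisely because the application supplies the hypothesis you omitted: the graph $H=\overline{G}$ to which you apply the sub-lemma has connected complement (namely $G$ itself), hence $H$ has no dominating vertex. Indeed, if $Q=\emptyset$ then $u$ would be adjacent in $H$ to every other vertex, making $u$ isolated in $G$, contradicting connectivity; so $P\neq\emptyset$ and $Q\neq\emptyset$. With both nonempty, every $p\in P$ must reach every $q\in Q$ in at most two steps, and since neither $u$ nor $w$ can serve as the middle vertex ($q\nsim u$, $p\nsim w$), each such pair is joined by an edge or a common neighbour inside $P\cup Q\cup C$; this simultaneously shows that no vertex of $P\cup Q$ is isolated in $F$ and that all of $P\cup Q$ lies in one component of $F$, giving $|F|\geq |P|+|Q|-1$ and the total $(n-1+|C|)+(|P|+|Q|-1)=2n-4>2n-5=|E(\overline{G})|$, the desired contradiction. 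So you must restate the sub-lemma with the extra hypothesis that $H$ has no dominating vertex (equivalently, $\overline{H}$ is connected) and replace the two faulty steps as above; with that correction your argument is complete, and it is a genuinely more elegant route to Lemma~\ref{lem10} than the paper's case analysis.
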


\begin{proof}
If $\Delta(G)\geq n-2$, then $G$ has a spanning tree $T$ with at
least $n-2$ leaves. Hence $mvc(G)\geq \ell(T)+1\geq n-1$. We are
done. Now we assume $\Delta(G)\leq n-3$. It follows from Lemma
\ref{lem7} that $diam(G)\leq 3$. If $diam(G)=2$, then $mvc(G)=n$ by
Proposition 2.3. We are done. Now we assume $diam(G)=3$. If $G$
contains only one pair $\{u,v\}$ of vertices at distance 3, then
give the two internal vertices of a $\{u,v\}$-geodesic one color,
and each other vertex a different fresh color. Clearly, it is an
$MVC$-coloring of $G$ using $n-1$ colors. Thus $mvc(G)\geq n-1$. We
are done. Now suppose that $G$ contains at least two pairs of
vertices at distance 3. If there exists two pairs
$\{u_1,v_1\},\{u_2,v_2\}$ of vertices at distance 3 such that
$\{u_1,v_1\}\cap\{u_2,v_2\}=\emptyset$, then $u_i,v_i$ are not
adjacent and have no common neighbors, since $d(u_i,v_i)=3$. So we
have $deg(u_i)+deg(v_i)\leq n-2$ for $i\in \{1,2\}$. Thus
$\sum_{v\in V(G)}deg(v)\leq 2(n-2)+(n-4)(n-3)=n^2-5n+8$. On the
other hand, $\sum_{v\in V(G)}deg(v)=2m=n^2-5n+10$, a contradiction.
Now suppose that for any two pairs $\{u_1,v_1\},\{u_2,v_2\}$ of
vertices at distance 3, $\{u_1,v_1\}\cap\{u_2,v_2\}\neq\emptyset$.
We distinguish the following cases.

{\bf Case 1:} All the pairs of vertices at distance 3 have a
common vertex, say $u_1$.

Since $m=\binom{n-2}{2}+2=n-1+\binom{n-3}{2}$, it follows from Lemma
\ref{lem6} that $G$ has a spanning tree $T$ with at least $n-3$
leaves. Hence $mvc(G)\geq n-2$. By contradiction, we assume that
$mvc(G)=n-2$. Let $f$ be an extremal $MVC$-coloring of $G$, and
$f(v_i)$ be the color of vertex $v_i$. Thus $f$ wastes two colors.
This can be classified into the following two subcases:

{\bf Subcase 1.1:} There are two nontrivial colors $R$ and $B$,
and the color subgraph $G_R$ (resp. $G_B$) consists of two adjacent vertices
$w_1, w_2$ (resp. $w_3, w_4$).

Then for each pair $\{u_1,v\}$ of vertices at distance 3,
$\{u_1,v\}$ must be connected by a vertex-monochromatic path with
color $R$ or $B$. Let $X$ be the set of vertices $v$ with
$d(u_1,v)=3$ such that $u_1,v$ can be connected by a
vertex-monochromatic path $P_1$ with color $R$, say $P_1=u_1w_1w_2v$
(this implies $u_1\nsim w_2$). Let $Y$ be the set of vertices $v$
with $d(u_1,v)=3$ such that $u_1,v$ can only be connected by a
vertex-monochromatic path $P_2$ with color $B$, say $P_2=u_1w_3w_4v$
(this implies $u_1\nsim w_4$). See {\bf Fig 1$(1)$}. Clearly, $X\neq
\emptyset$ and $Y\neq \emptyset$; otherwise we can get an
$MVC$-coloring using more colors. Moreover, $\{X,Y\}$ is a partition
of all the vertices at distance 3 from $u_1$.

Let $Z=\{u_1,w_1,w_2,w_3,w_4\}\cup X\cup Y$. For $u\in V(G)\setminus
Z$, if $u\sim u_1$, then $u$ is not adjacent to any vertex in $X\cup
Y$, since the distance between them is 3. If $u\nsim u_1$, then $u$
can not be adjacent to every vertex in $Z\setminus \{u_1\}$;
otherwise we can give $\{u,w_1\}$ one color, and each other vertex a
distinct fresh color, which is an $MVC$-coloring using $(n-1)$
colors. Thus $u$ is not adjacent to at least two vertices in $Z$.
For $v\in X\cup Y$, since $d(u_1,v)=3$, $v$ is not adjacent to
$\{u_1,w_1,w_3\}$. By the definition of $Y$, $w_2$ is not adjacent
to any vertex in $Y$. Furthermore, $w_4$ can not be adjacent to all
the vertices in $X$; otherwise we can give $w_2$ a fresh color, and
get an $MVC$-coloring using $n-1$ colors. As we have noted, $u_1$ is
not adjacent to $w_2$, $w_4$. From the above, we have $m\leq
\binom{n}{2}-2(n-|X|-|Y|-5)-3(|X|+|Y|)-4 =\binom{n}{2}-2n-|X|-|Y|+6
\leq \binom{n}{2}-2n+4<\binom{n-2}{2}+2$, a contradiction.
\begin{figure}[ht]
\begin{center}
\includegraphics[width=5.5cm]{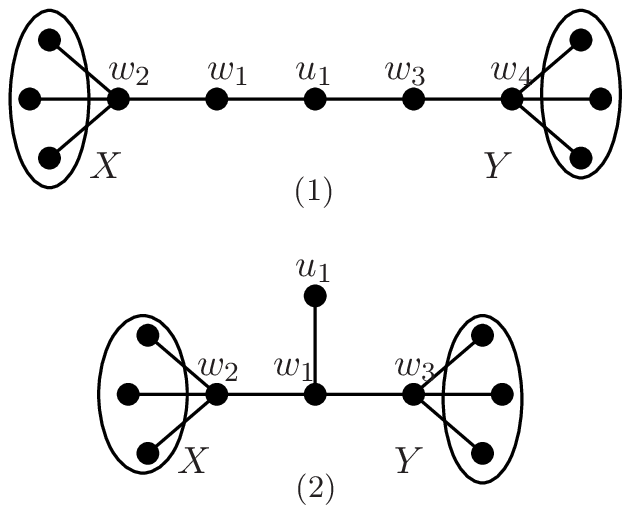}\\
\textbf{Fig 1}: The illustration for Case 1
\end{center}
\end{figure}

{\bf Subcase 1.2:}  There is exactly one nontrivial color $R$, and
the color subgraph $G_R$ consists of three vertices $w_1,w_2,w_3$.

For some pair $\{u_1,v_1\}$ of vertices at distance 3, they are
connected by a vertex-monochromatic path $P_1$ with color $R$.
Without loss of generality, we assume $P_1=u_1w_1w_2v_1$ (this
implies $u_1\nsim w_2$). For $w_3$, there must exist a pair
$\{u_1,v_2\}$ of vertices at distance 3 such that all the
vertex-monochromatic paths $P_2$ connecting them contain $w_3$. If
$P_2=u_1w_3w_1v_2$, then $d(u_1,v_2)=2$, since $u_1\sim w_1$ and
$w_1\sim v_2$, a contradiction. If $P_2=u_1w_3w_2v_2$, then
$P_2'=u_1w_1w_2v_2$ is also a vertex-monochromatic
$\{u_1,v_2\}$-path not containing $w_3$, a contradiction. If
$P_2=u_1w_2w_3v_2$, then $u_1\sim w_2$, a contradiction. Thus $P_2$
must be the the form $P_2=u_1w_1w_3v_2$ (this implies $u_1\nsim
w_3$). Let $X$ be the set of vertices $v$ with $d(u_1,v)=3$ such
that $\{u_1,v\}$ are connected by a vertex-monochromatic path
$P=u_1w_1w_2v$. Let $Y$ be the set of vertices $v$ with $d(u_1,v)=3$
such that $\{u_1,v\}$ can only be connected by a
vertex-monochromatic path $P=u_1w_1w_3v$. See {\bf Fig 1$(2)$}.
Clearly, $X\neq \emptyset$ and $Y\neq \emptyset$. Moreover,
$\{X,Y\}$ is a partition of all the vertices at distance 3 from
$u_1$.

Let $Z=\{u_1,w_1,w_2,w_3\}\cup X\cup Y$.
With similar arguments as in Subcase 1.1, we have
(1) For $u\in V(G)\setminus Z$,
$u$ is not adjacent to at least two vertices in $Z$.
(2) For $v\in X\cup Y$,
$v$ is not adjacent to $u_1,w_1$.
(3) $w_2$ is not adjacent to any vertex in $Y$.
(4) $w_3$ is not adjacent to all the vertices in $X$.
(5) $u_1$ is not adjacent to $w_2$, $w_3$.
From the above, we have
$m\leq \binom{n}{2}-2(n-|X|-|Y|-4)-2(|X|+|Y|)-4
=\binom{n}{2}-2n-4|X|-4|Y|+4
\leq \binom{n}{2}-2n+4<\binom{n-2}{2}+2$,
a contradiction.

Therefore, in Case 1 we have $mvc(G)\geq n-1$.


{\bf Case 2:} There exist three pairs $\{u_i,v_i\}$ $(1\leq i\leq 3)$ of vertices
with $d(u_i,v_i)=3$, such that $\{u_1,v_1\}\cap \{u_2,v_2\}\cap \{u_3,v_3\}=\emptyset$.

Since any two such pairs have a common vertex,
without loss of generality, we may assume
$u_1=u_2,u_3=v_1,v_3=v_2$.
Now the three pairs can be written as
$\{u_1,v_1\},\{u_1,v_2\},\{v_1,v_2\}$.
As two vertices in each pair are at distance 3,
$u_1\nsim v_1$, $u_1\nsim v_2$, $v_1\nsim v_2$,
and each vertex in $V(G)\setminus \{u_1,v_1,v_2\}$
is adjacent to at most one vertex in $\{u_1,v_1,v_2\}$.
Thus $deg(u_1)+deg(v_1)+deg(v_2)\leq n-3$.
Then we have
$\sum_{v\in V(G)}deg(v)\leq n-3+(n-3)(n-3)=n^2-5n+6$.
On the other hand, $\sum_{v\in V(G)}deg(v)=2m=n^2-5n+10$,
a contradiction.

Now we show the sharpness of the bound. Let $G_0$ be the graph
obtained from a complete graph on $\{v_1,\ldots,v_{n-2}\}$. by
adding a path $P_0=v_{n-2}v_{n-1}v_n$ to it. It is easily checked
that $m(G_0)=\binom{n-2}{2}+2$ and $diam(G_0)=3$. By Proposition
2.3, we know $mvc(G_0)\leq n-1$. Hence $mvc(G_0)=n-1$.
\end{proof}

\begin{thm}\label{thm4}
Let $G$ be a connected graph with $n\geq 3$ vertices and $m$ edges.
If $n+\binom{t}{2}\leq m\leq n+\binom{t+1}{2}-1$ for $1\leq t\leq n-2$,
then $mvc(G)\geq t+2$, and this bound is sharp
except for $m=n+\binom{t+1}{2}-1,t\in \{n-3,n-4\}$.
For the latter two cases, $mvc(G)\geq t+3$, and this bound is sharp.
\end{thm}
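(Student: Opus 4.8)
The plan is to prove the uniform lower bound $mvc(G)\ge t+2$ first, and then establish sharpness, where the two stated exceptions will emerge on their own. For the lower bound I would first dispose of the range $1\le t\le n-3$, in which $n\ne t+2$. Since $m\ge n+\binom{t}{2}=|V(G)|+\binom{t}{2}$, Lemma \ref{lem6} produces a spanning tree $T$ with at least $t+1$ leaves, and Proposition \ref{prop1} then gives $mvc(G)\ge \ell(T)+1\ge t+2$. The remaining value $t=n-2$ I would handle by density: here $m\ge n+\binom{n-2}{2}$, so the complement $\overline{G}$ has at most $n-3$ edges, and a short degree count (if $diam(G)\ge 3$ a pair $u,v$ at distance $3$ contributes at least $n-1$ edges to $\overline{G}$) forces $diam(G)\le 2$; by Proposition \ref{prop2} we then get $mvc(G)=n=t+2$. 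This establishes $mvc(G)\ge t+2$ for every admissible $t$ and every $m$ in the interval.

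For sharpness I would exhibit an extremal graph at the top endpoint $m=n+\binom{t+1}{2}-1$ of the interval. Counting the edges of the graph in Lemma \ref{lem9} (namely $\binom{t+2}{2}-1$ clique edges together with $n-t-1$ path edges) gives exactly $\binom{t+2}{2}-1+(n-t-1)=n+\binom{t+1}{2}-1$, so that graph realizes the upper endpoint. Lemma \ref{lem9} gives $mvc\le t+2$ there, which combined with the lower bound yields $mvc=t+2$; this proves sharpness precisely in the range $1\le t\le n-5$ where Lemma \ref{lem9} is available.

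It then remains to examine the endpoints excluded by the restriction $t\le n-5$, that is $t\in\{n-4,n-3,n-2\}$ with $m=n+\binom{t+1}{2}-1$. For $t=n-2$ this endpoint is $m=\binom{n}{2}$, the complete graph, with $mvc=n=t+2$, so sharpness holds and this is not an exceptional case. For $t=n-3$ the endpoint is $m=n+\binom{n-2}{2}-1$, for which $p=m-n+1=\binom{n-2}{2}$ has the form $\binom{\cdot}{2}$; Lemma \ref{lem7} then gives maximum diameter exactly $2$, so every such graph has $mvc=n=t+3$ by Proposition \ref{prop2}, and the improved bound $mvc\ge t+3$ is sharp with equality. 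For $t=n-4$ the endpoint is $m=n+\binom{n-3}{2}-1=\binom{n-2}{2}+2$, which is exactly the hypothesis of Lemma \ref{lem10}; that lemma supplies both $mvc\ge n-1=t+3$ and a matching extremal graph, so $t+3$ is again the sharp value.

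The delicate conceptual point, and what I expect to be the crux, is explaining why the exceptions are exactly $t\in\{n-3,n-4\}$ and no others. This is dictated by Lemma \ref{lem9}, whose reduction replaces $G$ by a cycle $C^i$ of length $n-t+1$ and exploits $mvc(C^i)=3$; by Lemma \ref{lem8} this saving is available only when $n-t+1\ge 6$, i.e. $t\le n-5$. For $t=n-4$ and $t=n-3$ the corresponding cycle has length $5$ and $4$, so $mvc(C^i)=n-t+1$ rather than $3$ and the reduction yields nothing useful; instead the large edge count forces $diam(G)$ to be small (via Lemma \ref{lem7} or the complement-degree count above), pushing $mvc(G)$ up to $t+3$. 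I would therefore present the general lower bound and the generic sharpness first, and then treat these two dense endpoints as short, self-contained cases, checking in each that the value $t+3$ is both forced and attained.
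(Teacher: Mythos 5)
Your lower bound is sound: the split into $t\le n-3$ (Lemma \ref{lem6} plus Proposition \ref{prop1}) and $t=n-2$ matches the paper, and your complement-counting argument for $t=n-2$ is a correct, slightly more elementary substitute for the paper's appeal to Lemma \ref{lem7}. Your treatment of the top endpoint $m=n+\binom{t+1}{2}-1$ (Lemma \ref{lem9} for $t\le n-5$, $K_n$ for $t=n-2$, Lemma \ref{lem7} for $t=n-3$, Lemma \ref{lem10} for $t=n-4$) also matches the paper, including the edge count showing Lemma \ref{lem9}'s graph sits at that endpoint. But there is a genuine gap: you prove sharpness \emph{only} at the top endpoint, whereas the theorem asserts sharpness for every admissible pair $(m,t)$, i.e.\ for every $m$ with $n+\binom{t}{2}\le m\le n+\binom{t+1}{2}-1$ other than the two excepted pairs. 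That the claim is per-$(m,t)$ is forced by how the exceptions are stated: for $t\in\{n-3,n-4\}$ the interior values $m<n+\binom{t+1}{2}-1$ are \emph{not} excepted, so for each of them a connected graph with exactly $n$ vertices, $m$ edges and $mvc=t+2$ must be exhibited; your proposal supplies none (Lemma \ref{lem9} is unavailable since $t>n-5$, and your endpoint analysis for those $t$ yields the different value $t+3$). The same omission occurs at interior $m$ for every $t\le n-5$. This is not cosmetic: the proof of Corollary \ref{cor1} (case $n-1\le k\le n$) invokes precisely a graph with $m=n-2+\binom{k-2}{2}$ edges and $mvc=k-1$, which is an interior point of the interval for $t=k-3\in\{n-4,n-3\}$.

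The paper closes this with a second extremal construction, valid for all $1\le t\le n-3$ and all interior $m$: take $K_{t+1}$ on $\{v_1,\dots,v_{t+1}\}$, attach the path $v_{t+1}v_{t+2}\cdots v_n$, and add the remaining $r$ edges ($1\le r\le t-1$) between $v_{t+2}$ and $\{v_1,\dots,v_t\}$. This graph $G_1$ has $m=n+\binom{t}{2}+r-1$, which sweeps out exactly the interval minus its top endpoint, and $diam(G_1)=n-t\ge 3$, so Proposition \ref{prop2}(2) gives $mvc(G_1)\le n-(n-t)+2=t+2$, matching the lower bound. (For $t=n-2$ no construction is needed at interior $m$, since there every graph in the range has diameter at most $2$ and hence $mvc=n=t+2$ automatically.) Adding this diameter-based construction to your argument recovers the full statement; the rest of your proposal, including the explanation of why the exceptions are exactly $t\in\{n-3,n-4\}$, is correct and in line with the paper.
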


\begin{proof}
Let $p=m-n+1$. Then $\binom{t}{2}+1\leq p\leq \binom{t+1}{2}$.

{\bf Case 1:} $n=t+2$.

If $\binom{t}{2}+1\leq p\leq \binom{t+1}{2}-1$,
then it follows from Lemma \ref{lem7} that
the diameter of $G$ is at most $n-1-x(p)+y(p)=(t+1)-(t+1)+2=2$.
If $p=\binom{t+1}{2}$, then
the diameter of $G$ is at most $n-1-x(p)+y(p)=(t+1)-(t+1)+1=1$.
By Proposition 2.3, we have $mvc(G)=n=t+2$.

{\bf Case 2:} $n\neq t+2$.

By Lemma \ref{lem6}, we know that $G$ contains a spanning tree $T$
with at least $t+1$ leaves.
Then $mvc(G)\geq \ell(T)+1\geq t+2$.

Next we will show the sharpness of the bound.
If $\binom{t}{2}+1\leq p\leq \binom{t+1}{2}-1$, then
we can take the extremal graph $G_1$ as follows:
First take a complete graph $K_{t+1}$ with vertex set $\{v_1,\ldots,v_{t+1}\}$,
and then add a path $P=v_{t+1},\ldots,v_n$ to it,
and finally add the remaining edges (at most $t-1$)
between $v_{t+2}$ and $\{v_1,\ldots,v_t\}$ randomly.
It is easily checked that $diam(G_1)=n-t$.
By Proposition 2.3, we have $mvc(G_1)\leq t+2$.
Hence $mvc(G_1)=t+2$.
If $p=\binom{t+1}{2}$ and $1\leq t\leq n-5$,
then we can take the extremal graph $G_2$ as in Lemma \ref{lem9}.
It is easily checked that $m(G_2)=n-1+\binom{t+1}{2}$, and
$p=m-n+1=\binom{t+1}{2}$.
By Lemma \ref{lem9}, we have $mvc(G_2)\leq t+2$.
Hence $mvc(G_2)=t+2$.
If $p=\binom{t+1}{2}$ and $t=n-2$,
then $m=\binom{n}{2}$, i.e. $G\cong K_n$.
Thus $mvc(G)=n=t+2$.

If $p=\binom{t+1}{2}$ and $t=n-4$,
then $m=\binom{n-2}{2}+2$.
Now by Lemma \ref{lem10}, we have $mvc(G)\geq n-1=t+3$,
and this bound is sharp.

If $p=\binom{t+1}{2}$ and $t=n-3$,
then it follows from Lemma \ref{lem7} that
the maximum diameter is $n-1-x(p)+y(p)=n-1-(t+1)+1=2$.
Hence $mvc(G)=n=t+3$.
\end{proof}

\begin{cor}\label{cor1}
Given two integers $n,k$ with $3\leq k\leq n$,
\begin{align*}
f_v(n,k)=
\begin{cases}
n-1 & k=3\\
n+\binom{k-2}{2} & 4\leq k \leq n-2\\
n-1+\binom{k-2}{2} & n-1\leq k \leq n\\
\end{cases}
\end{align*}
\end{cor}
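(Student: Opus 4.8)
The plan is to read off Corollary \ref{cor1} directly from Theorem \ref{thm4} via the substitution $k=t+2$, handling the three ranges of $k$ separately and tracking the exceptional clauses of the theorem with care. Throughout I would use that $f_v(n,k)$ is pinned down by two requirements: (i) every connected graph on $n$ vertices with at least $f_v(n,k)$ edges has $mvc\geq k$, and (ii) there is a connected graph on $n$ vertices with exactly $f_v(n,k)-1$ edges and $mvc\leq k-1$. I would establish (i) and (ii) in each range; the value $f_v(n,k)$ is then forced.

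The case $k=3$ is immediate and lies below the reach of Theorem \ref{thm4}, which is why it is listed separately: by Proposition \ref{prop1} every connected graph has $mvc(G)\geq 3$, and since a connected graph on $n$ vertices has at least $n-1$ edges (attained by a tree), one gets $f_v(n,3)=n-1$. For the main range $4\leq k\leq n-2$ I would set $t=k-2$, so $2\leq t\leq n-4$. For (i), any $m\geq n+\binom{t}{2}$ lies in some interval $n+\binom{s}{2}\leq m\leq n+\binom{s+1}{2}-1$ with $t\leq s\leq n-2$ (the upper limit because $\binom{n}{2}$ is the largest possible edge count), and Theorem \ref{thm4} then gives $mvc(G)\geq s+2\geq k$, the exceptional clauses only strengthening this. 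For (ii), I would take $m=n+\binom{t}{2}-1$, so $p:=m-n+1=\binom{t}{2}=\binom{(t-1)+1}{2}$ is exactly the boundary value $\binom{t'+1}{2}$ for $t'=t-1=k-3$; since $1\leq t'\leq n-5$ throughout this range, Lemma \ref{lem9} supplies a graph with $mvc=t'+2=k-1$. Hence $f_v(n,k)=n+\binom{k-2}{2}$.

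Finally, for $n-1\leq k\leq n$ the exceptional clauses shift the threshold down by one. For (i), I would take $m=n-1+\binom{k-2}{2}$, i.e. $p=\binom{k-2}{2}=\binom{(k-3)+1}{2}$, which is precisely the exceptional value $\binom{t+1}{2}$ with $t=k-3\in\{n-4,n-3\}$; Theorem \ref{thm4} then gives $mvc(G)\geq t+3=k$, and every larger $m$ gives $mvc(G)\geq k$ by the same monotonicity as before. For (ii), I would take $m=n-2+\binom{k-2}{2}$, for which $p=\binom{k-2}{2}-1$ lies in the interior of the interval indexed by $t=k-3$, strictly below its exceptional endpoint, so the sharp construction of Theorem \ref{thm4} (diameter $n-t\in\{3,4\}$, hence $mvc\leq (k-3)+2$ by Proposition \ref{prop2}) yields a graph with $mvc=k-1$. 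This gives $f_v(n,k)=n-1+\binom{k-2}{2}$.

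The main obstacle I anticipate is the bookkeeping around the exceptional endpoints: one must verify that the guaranteeing direction lands exactly on (or above) an exceptional endpoint precisely when $k\in\{n-1,n\}$, while the extremal graph used for sharpness, having one fewer edge, sits strictly inside the preceding interval and therefore avoids the jump in $mvc$ that an exceptional endpoint would cause — otherwise the claimed minimality of $f_v(n,k)$ would fail. Checking these index identities, together with the constraint $1\leq t'\leq n-5$ required to apply Lemma \ref{lem9} in the middle range, is the delicate part; the remainder is a routine translation of Theorem \ref{thm4}.
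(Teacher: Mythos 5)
Your proposal is correct and takes essentially the same route as the paper: both directions are read off from Theorem \ref{thm4} via $t=k-2$ (for the guarantee) and $t=k-3$ (for the sharpness witness), and your explicit appeals to Lemma \ref{lem9} and to the diameter-$(n-t)$ construction with Proposition \ref{prop2} are exactly what the theorem's sharpness clause packages. The only caveat --- shared with the paper's own proof --- is the degenerate case $n=5$, $k=4$, where the sharpness witness has $m=n-2+\binom{k-2}{2}=n-1$ edges, i.e.\ it is a tree ($p=0$) lying outside every interval of Theorem \ref{thm4}, so there one must instead observe directly that $mvc(P_5)=3$.
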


\begin{proof}
Since $mvc(G)\geq 3$ for any connected graph $G$, we know $f_v(n,3)=n-1$.
For $4\leq k\leq n-2$,
if $m\geq n+\binom{k-2}{2}$, then it follows from Theorem \ref{thm4}
that $mvc(G)\geq k$.
Hence $f_v(n,k)\leq n+\binom{k-2}{2}$.
For $m=n-1+\binom{k-2}{2}$, by Theorem \ref{thm4},
there exists a graph $G_0$ with $n$ vertices and $m$ edges
such that $mvc(G_0)=k-1$.
Hence $f_v(n,k)\geq n+\binom{k-2}{2}$.
So we get $f_v(n,k)=n+\binom{k-2}{2}$ for $4\leq k\leq n-2$.
For $n-1\leq k \leq n$, if $m\geq n-1+\binom{k-2}{2}$,
then it follows from Theorem \ref{thm4} that
$mvc(G)\geq k$. Hence $f_v(n,k)\leq n-1+\binom{k-2}{2}$.
For $m=n-2+\binom{k-2}{2}$,
by Theorem \ref{thm4}, there exists a graph $G_0$ with $n$ vertices and $m$ edges
such that $mvc(G_0)=k-1$.
Hence $f_v(n,k)\geq n-1+\binom{k-2}{2}$.
So we get $f_v(n,k)=n-1+\binom{k-2}{2}$ for $n-1\leq k \leq n$.
\end{proof}

\section{Nordhaus-Gaddum-type theorem for $mvc(G)$}

A \emph{double star} is a tree with diameter 3.
The \emph{centers} of a double star are the two nonleaves in it.
\begin{lem}\cite{Wu}\label{lem1}
Let $G$ be a connected graph with connected complement
$\overline{G}$. Then

$(1)$ if $diam(G)>3$, then $diam(\overline{G})=2$,

$(2)$ if $diam(G)=3$, then $\overline{G}$
has a spanning subgraph which is a double star.
\end{lem}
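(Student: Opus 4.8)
The plan is to handle the two parts separately, in both cases translating distance facts about $G$ into adjacency facts about $\overline{G}$, using throughout that two vertices are adjacent in $\overline{G}$ exactly when they are non-adjacent in $G$.

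For part (1), I would assume $diam(G)\geq 4$ and fix a pair $x,y$ with $d(x,y)\geq 4$. Since $G$ has an edge, $\overline{G}$ is not complete, so $diam(\overline{G})\geq 2$, and it suffices to prove $diam(\overline{G})\leq 2$. Any two vertices that are non-adjacent in $G$ are already adjacent in $\overline{G}$, so the only pairs needing attention are those $\{u,v\}$ with $u\sim v$ in $G$; for such a pair I would exhibit a common neighbour in $\overline{G}$, i.e. a vertex $w$ with $w\nsim u$ and $w\nsim v$ in $G$. The key step is a short dichotomy: if $x$ is at distance at least $2$ from both $u$ and $v$, take $w=x$; otherwise $d(x,u)\leq 1$ say, whence $d(x,v)\leq 2$ because $u\sim v$, and the triangle inequality then forces $d(y,u)\geq 3$ and $d(y,v)\geq 2$, so $w=y$ works. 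This gives $d_{\overline{G}}(u,v)\leq 2$ for every pair, hence $diam(\overline{G})=2$.

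For part (2), I would assume $diam(G)=3$ and fix $u,v$ with $d(u,v)=3$; then $u\sim v$ in $\overline{G}$, and these two vertices will be the centers of the double star. The first observation is that no vertex is adjacent in $G$ to both $u$ and $v$ (else $d(u,v)\leq 2$), so in $\overline{G}$ every vertex of $V(G)\setminus\{u,v\}$ is adjacent to $u$ or to $v$. Attaching each such vertex to one center it meets, together with the edge $uv$, produces a spanning tree of $\overline{G}$ in which every vertex lies within distance one of $\{u,v\}$. The remaining point is to ensure this tree is genuinely a double star, i.e. that both centers receive a leaf rather than the tree collapsing to a star. For this I would set $A=\{w\neq u,v: w\nsim u\}$ and $B=\{w\neq u,v: w\nsim v\}$ and show both are non-empty using connectivity of $G$: if $A=\emptyset$ then $u$ is adjacent in $G$ to every vertex except $v$, and any neighbour of $v$ (which exists since $G$ is connected) yields a $u$–$v$ path of length $2$, contradicting $d(u,v)=3$; symmetrically $B\neq\emptyset$. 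Since $A\cup B=V(G)\setminus\{u,v\}$ has at least two elements, I can pick distinct $a\in A$ and $b\in B$, assign $a$ to $u$ and $b$ to $v$, and route the remaining vertices arbitrarily; the result is a spanning tree with a leaf on each center, hence of diameter $3$, which is the desired spanning double star.

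I expect the distance bookkeeping in part (1) to be routine once the pair $\{x,y\}$ with $d(x,y)\geq 4$ is fixed, so the delicate step is the final one in part (2): ruling out the degenerate star by using connectivity of $G$ to force both $A$ and $B$ non-empty, and then choosing the two leaves distinct.
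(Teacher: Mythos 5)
Your proof is correct, but there is nothing in the paper to compare it against: the paper does not prove this lemma at all, it simply quotes it from reference \cite{Wu}. So your write-up supplies a complete, self-contained argument where the paper relies on a citation. Checking the details: in part (1), the dichotomy on $d(x,u)$ and $d(x,v)$ is sound --- if both distances are at least $2$ then $x$ itself is a common neighbour of $u$ and $v$ in $\overline{G}$, and otherwise the triangle inequality gives $d(y,u)\geq 3$ and $d(y,v)\geq 2$, so $y$ serves (the degenerate cases $x\in\{u,v\}$ are covered, since then $d(x,u)\leq 1$ holds and the second branch applies); together with the observation that $\overline{G}$ is not complete, this yields $diam(\overline{G})=2$. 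In part (2), the only delicate point is exactly the one you isolated: avoiding the degenerate star. Your argument closes it properly, since $A=\emptyset$ would give a $u$--$v$ path of length $2$ through any neighbour of $v$, contradicting $d(u,v)=3$, and symmetrically for $B$; and since $A\cup B=V(G)\setminus\{u,v\}$ contains the two internal vertices of a $u$--$v$ geodesic, the case $A=B=\{w\}$ is impossible, so distinct representatives $a\in A$, $b\in B$ exist and the resulting spanning tree of $\overline{G}$ has a leaf at each of $u$ and $v$, i.e.\ it is a genuine double star.
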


As we all know, a connected graph on $n$ vertices has at least $n-1$ edges.
If both $G$ and $\overline{G}$ are connected,
then $2(n-1)\leq e(G)+e(\overline{G})=\binom{n}{2}$,
and so $n\geq 4$.
In the sequel, we always assume that $G$ has at least $n\geq 4$ vertices,
and both $G$ and $\overline{G}$ are connected.
Clearly, for $n=4$, both $G$ and $\overline{G}$ are a path on four vertices.
Thus $mvc(G)=mvc(\overline{G})=3$, and
$mvc(G)+mvc(\overline{G})=6$.
\begin{thm}
If $G$ is a graph on $n\geq 5$ vertices, then $n+3\leq mvc(G)+mvc(\overline{G})\leq 2n$,
and the bounds are sharp.
\end{thm}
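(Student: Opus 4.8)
The plan is to treat the two bounds separately, disposing of the trivial inequalities quickly and reserving the real work for the two sharpness constructions.

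For the upper bound, observe that an \emph{MVC}-coloring of any graph on $n$ vertices uses at most $n$ colors, so $mvc(G)\le n$ and $mvc(\overline{G})\le n$ hold trivially and give $mvc(G)+mvc(\overline{G})\le 2n$. The content lies in sharpness: I need, for every $n\ge 5$, a graph $G$ with $diam(G)\le 2$ and $diam(\overline{G})\le 2$, since by Proposition \ref{prop2}(1) this forces $mvc(G)=mvc(\overline{G})=n$. I would construct such a $G$ from a $5$-cycle $C_5=u_1u_2u_3u_4u_5u_1$ by adding $n-5$ further vertices $w_1,\dots,w_{n-5}$, each joined to the two vertices $u_1$ and $u_3$ (at distance two in $C_5$) and to nothing else. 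A direct check shows every non-adjacent pair of $G$ has a common neighbor, so $diam(G)=2$, and every edge of $G$ has a common non-neighbor, so $diam(\overline{G})=2$; for $n=5$ this degenerates to $G=\overline{G}=C_5$, which is self-complementary of diameter $2$.

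For the lower bound I would split on the diameters. If $diam(G)\le 2$, then $mvc(G)=n$ by Proposition \ref{prop2}(1), and since $mvc(\overline{G})\ge 3$ by Proposition \ref{prop1} the sum is at least $n+3$; the case $diam(\overline{G})\le 2$ is symmetric. The remaining case is $diam(G)\ge 3$ and $diam(\overline{G})\ge 3$. Here Lemma \ref{lem1}(1) forbids either diameter from exceeding $3$, because $diam(G)>3$ would force $diam(\overline{G})=2$; hence $diam(G)=diam(\overline{G})=3$. Then Lemma \ref{lem1}(2) applied to $G$ yields a spanning double star in $\overline{G}$, and applied to $\overline{G}$ yields a spanning double star in $G$. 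A double star on $n$ vertices is a spanning tree with $n-2$ leaves, so Proposition \ref{prop1} gives $mvc(G)\ge n-1$ and $mvc(\overline{G})\ge n-1$, whence $mvc(G)+mvc(\overline{G})\ge 2n-2\ge n+3$ for $n\ge 5$. This completes the lower bound.

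Finally, for sharpness of the lower bound I would take $G=\overline{P_n}$, so $\overline{G}=P_n$. The path $P_n$ has diameter $n-1\ge 4$, so Proposition \ref{prop2}(2) gives $mvc(P_n)\le n-(n-1)+2=3$, hence $mvc(\overline{G})=3$; and for $n\ge 5$ the complement $\overline{P_n}$ has diameter $2$, so $mvc(G)=n$, realizing $mvc(G)+mvc(\overline{G})=n+3$. The main obstacle is the upper-bound construction: one must exhibit, uniformly in $n\ge 5$, a single graph whose diameter and whose complement's diameter are both $2$, and verify the two diameter conditions simultaneously. By contrast, both trivial inequalities and the entire lower bound follow in a few lines from Propositions \ref{prop1} and \ref{prop2} together with Lemma \ref{lem1}.
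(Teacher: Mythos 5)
Your proposal is correct and follows essentially the same route as the paper: trivial upper bound, lower bound by the diameter case analysis via Lemma \ref{lem1} with spanning double stars giving $mvc\geq n-1$ on each side, and $P_n$ (with its diameter-2 complement) for sharpness of the lower bound. The only cosmetic differences are that you verify an explicit diameter-2 construction ($C_5$ plus vertices joined to $u_1,u_3$) where the paper simply exhibits a graph in its Fig 2, and you deduce $mvc\geq n-1$ from Proposition \ref{prop1} applied to the double star (a spanning tree with $n-2$ leaves) rather than describing the coloring directly; both are sound.
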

\begin{proof}
For any graph $G$, we have a trivial upper bound $mvc(G)\leq n$.
So $mvc(G)+mvc(\overline{G})\leq 2n$.
Now take the graph $G_0$ in {\bf Fig 2}.

\begin{figure}[ht]
\begin{center}
\includegraphics[width=8cm]{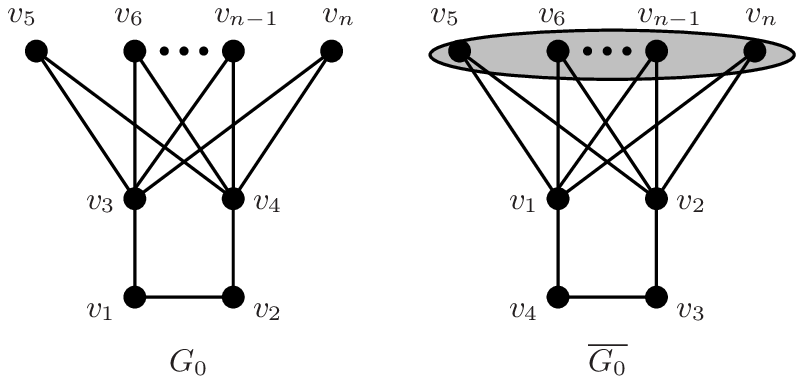}\\
\textbf{Fig 2}: $diam(G_0)=diam(\overline{G_0})=2$
\end{center}
\end{figure}
It is easily checked that $diam(G_0)=diam(\overline{G_0})=2$.
By Proposition \ref{prop2}, we have $mvc(G_0)+mvc(\overline{G_0})=2n$,
which implies the sharpness of the bound.

For the lower bound, if $diam(G)>3$, then by Lemma \ref{lem1}, we
have $diam(\overline{G})=2$. Hence $mvc(G)+mvc(\overline{G})\geq
3+n$. Now we can suppose $diam(G)\leq 3$ and $diam(\overline{G})\leq
3$. If $diam(G)\leq 3$ and $diam(\overline{G})\leq2$, then similarly
we have $mvc(G)+mvc(\overline{G})\geq 3+n$. If
$diam(G)=diam(\overline{G})=3$, then by Lemma \ref{lem1}, $G$ (resp.
$\overline{G}$) contains a double star $S_1$ (resp. $S_2$) as a
spanning subgraph. And $mvc(S_i)\geq n-1$, since we can give the two
centers in $S_i$ one color, and each other vertex a distinct fresh
color, which induces an $MVC$-coloring using $n-1$ colors. Thus
$mvc(G)+mvc(\overline{G})\geq mvc(S_1)+mvc(S_2)\geq 2(n-1)\geq n+3$
for $n\geq 5$. Now we construct a graph $G_0$ that reaches the lower
bound. Just take $G_0=P_{n}$. Since $diam(P_{n})=n-1\geq 4$, it
follows from Lemma \ref{lem1} that $diam(\overline{P_n})=2$. Then
$mvc(P_n)+mvc(\overline{P_n})=3+n$. The proof is complete.
\end{proof}


\begin{thebibliography}{111}

\bibitem{AH} M. Aouchiche, P. Hansen, A survey of Nordhaus-Gaddum type
relations, Discrete Appl. Math. 161(4-5)(2013), 466-546.

\bibitem{Boll} B. Bollob$\acute{a}$s, \emph{Extremal Graph Theory},
Courier Dover Publications, 2004.

\bibitem{Bondy} J.A. Bondy, U.S.R. Murty, \emph{Graph Theory}, GTM 244, Springer, 2008.

\bibitem{Cai} Q. Cai, X. Li, D. Wu, \emph{Erd\H{o}s-Gallai-type results for colorful
monochromatic connectivity of a graph}, arXiv:1412.7798, 2014.

\bibitem{Caro} Y. Caro, R. Yuster, \emph{Colorful monochromatic connectivity},
Discrete Math. 311(2011), 1786-1792.

\bibitem{Caro1} Y. Caro, D. B. West, R. Yuster, \emph{Connected domination and spanning
trees with many leaves}, SIAM J. Discrete Math. 13(2)(2000), 202-211.

\bibitem{CLL} L. Chen, X. Li, M. Liu, \emph{Nordhaus-Gaddum-type
bounds for rainbow vertex-connection number of a graph}, Utilitas
Math. 86(2011), 335-340.

\bibitem{X. Li} L. Chen, X. Li, H. Lian, \emph{Nordhaus-Gaddum-type theorem
for rainbow connection number of graphs}, Graphs \& Combin. 29(5)(2013), 1235-1247.

\bibitem{chenli} L. Chen, X. Li, Y. Shi, \emph{The complexity of determining the rainbow
vertex-connection of a graph} Theoretical Computer Science
412(35)(2011), 4531-4535.

\bibitem{Ding} G. Ding, T. Johnson, P. Seymour, \emph{Spanning trees with many leaves},
J. Graph Theory 37(2001), 189-197.

\bibitem{Griggs} J. R. Griggs, M. Wu, \emph{Spanning trees in graphs of
minimum degree 4 or 5}, Discrete Math. 104(2)(1992), 167-183.

\bibitem{Harary} F. Harary, \emph{The maximum connnectivity of a graph}, Mathematics 48(1962).

\bibitem{Harary 1} F. Harary, R.W. Robinson, \emph{The diameter of a graph and its complement},
Amer. Math. Monthly 92(1985), 211-212.

\bibitem{Harary 2} F. Harary, T.W. Haynes, \emph{Nordhaus-Gaddum inequalities for domination in graphs},
Discrete Math. 155(1996), 99-105.

\bibitem{Kleitman} D.J. Kleitman, D.B. West,
\emph{Spanning trees with many leaves,} SIAM J. Discrete Math.
4(1)(1991), 99-106.

\bibitem{Krivelevich and Yuster} M. Krivelevich, R. Yuster, \emph{The rainbow connection of
a graph is (at most) reciprocal to its minimum degree}, J. Graph
Theory 63(3)(2010), 185-191.

\bibitem{LiMao} X. Li, Y. Mao, Nordhaus-Gaddum-type results for the generalized edge-connectivity
of graphs, Discrete Appl. Math. 185(2015), 102-112.

\bibitem{Lishi} X. Li, Y. Shi, \emph{On the rainbow vertex-connection},
Discuss. Math. Graph Theory 33(2)(2013), 307-313.

\bibitem{Nordhaus} E.A. Nordhaus, J.W. Gaddum, \emph{On complementary graphs},
Amer. Math. Monthly 63(1956), 175-177.

\bibitem{Wu} L. Zhang, B. Wu, \emph{The Nordhaus-Gaddum-type inequalities
of some chemical indices}, MATCH Commun. Math. Couput. Chem. 54(2005), 189-194.
\end{thebibliography}
\end{document}